\newtheorem{theorem}{Theorem}
\newtheorem{lemma}[theorem]{Lemma}
\newtheorem{proposition}[theorem]{Proposition}
\newtheorem{observation}[theorem]{Observation}
\newtheorem{conjecture}[theorem]{Conjecture}
\newtheorem{phase}{Phase}
\newcommand\Setx[1] {\left\{{#1}\right\}}
\newcommand\Set[2] {\left\{{#1}:\,{#2}\right\}}
\newcommand\map[3] {{#1}:\,{#2}\to{#3}}
\newcommand\size[1] {\left|{#1}\right|}
\newcommand{\BB}{\mathcal B}
\newcommand{\PP}{\mathcal P}
\newcommand{\WW}{\mathcal W}
\newcommand{\cprob}[2]{\mathbf{P}\left(#1\ |\ #2\right)}
\newcommand{\prob}[1]{\mathbf{P}\left(#1\right)}
\newenvironment{tikzgraphv}[1]
  {\begin{tikzpicture}
      [vertex/.style={circle, black, fill, inner sep=0mm, minimum
        size=#1},
      whitev/.style={circle, draw=black, fill=white, thick, inner sep=0mm, minimum
        size=#1},       
      every edge/.style={thick,draw,>=stealth},
      lbl/.style={text height=10pt,below}
      ]\begin{scope}}
  {\end{scope}\end{tikzpicture}}
\newenvironment{tikzgraph}{\begin{tikzgraphv}{4pt}}{\end{tikzgraphv}}
\newenvironment{conflict}{\begin{tikzgraphv}{6pt}[scale=0.8]}{\end{tikzgraphv}}
\newbox\subfigbox
\title{\textbf{Fractional total colourings\\of graphs of high
    girth\footnote{Supported by project GA\v{C}R 201/09/0197 of the
      Czech Science Foundation.}}} %
\author{Tom\'{a}\v{s} Kaiser\footnote{Department of Mathematics and
    Institute for Theoretical Computer Science, University of West
    Bohemia, Univerzitn\'{\i}~8, 306~14~Plze\v{n}, Czech
    Republic. E-mail: \texttt{kaisert@kma.zcu.cz}. Supported by
    Research Plan MSM 4977751301 of the Czech
    Ministry of Education.}\\
  Andrew King\footnote{Institute for Theoretical Computer Science,
    Faculty of Mathematics and Physics, Charles University,
    Malostransk\'{e} n\'{a}m\v{e}st\'{\i} 25, 118 00 Prague, Czech
    Republic. E-mail: \texttt{aking6@cs.mcgill.ca}.}\\
  Daniel Kr\'{a}l'\footnote{Institute for Theoretical Computer
    Science, Faculty of Mathematics and Physics, Charles University,
    Malostransk\'{e} n\'{a}m\v{e}st\'{\i} 25, 118 00 Prague, Czech
    Republic. E-mail: \texttt{kral@kam.mff.cuni.cz}. The Institute for
    Theoretical Computer Science is supported by the Ministry of
    Education of the Czech Republic as project 1M0545.}}
\date{}
\begin{document}
\maketitle

\begin{abstract}
  Reed conjectured that for every $\epsilon>0$ and $\Delta$ there
  exists $g$ such that the fractional total chromatic number of a
  graph with maximum degree $\Delta$ and girth at least $g$ is at most
  $\Delta+1+\epsilon$. We prove the conjecture for $\Delta=3$ and for
  even $\Delta\geq 4$ in the following stronger form: For each of
  these values of $\Delta$, there exists $g$ such that the fractional
  total chromatic number of any graph with maximum degree $\Delta$ and
  girth at least $g$ is equal to $\Delta+1$.
\end{abstract}

\section{Introduction}

Total colouring and edge colouring share many common features. For
instance, Vizing's theorem asserts that the chromatic index of any
graph with maximum degree $\Delta$ is at most $\Delta+1$. The total
chromatic number of such a graph is known to be at most $\Delta+C$,
where $C$ is a constant, and is conjectured to be at most
$\Delta+2$. Asymptotically, these bounds are far from the trivial
upper bounds of $2\Delta-1$ and $2\Delta$, respectively.

In other ways, however, the two notions behave differently. Consider
their fractional versions (see below for the necessary
definitions). It is known that the fractional chromatic index of a
cubic bridgeless graph is equal to 3, the obvious lower bound. The
analogous assertion for fractional total colouring is false, as shown
by the graph $K_4$, whose fractional total chromatic number is 5. One
might ask whether high girth makes the fractional total chromatic
number arbitrarily close to $\Delta+1$. Indeed,
Reed~\cite{bib:Ree-talk} conjectured that this is exactly the case
(see Conjecture~\ref{conj:reed} below). In this paper, we confirm the
conjecture for $\Delta = 3$ and for even $\Delta$, in a stronger form.

Before stating the result in detail, we introduce the relevant
terminology. Let $G$ be a graph. The vertex and edge sets of $G$ will
be denoted by $V(G)$ and $E(G)$. Let $w$ be a function assigning each
independent set $I$ of $G$ a real number $w(I) \in [0,1]$. The
\emph{weight} $w[x]$ of $x\in V(G)$ with respect to $w$ is defined as
the sum of $w(I)$ over all independent sets $I$ in $G$ containing
$x$.

The function $w$ is a \emph{fractional colouring} of $G$ if for each
vertex $v$ of $G$,
\begin{equation*}
  w[v] \geq 1.
\end{equation*}
The \emph{size} $\size w$ of a fractional colouring $w$ is the sum of
$w(I)$ over all independent sets $I$. The \emph{fractional chromatic
  number} $\chi_f(G)$ of $G$ is the infimum of $\size w$ as $w$ ranges
over fractional colourings of $G$. It is easy to see that $\chi_f(G)
\leq \chi(G)$. It is also known (see, e.g.,
\cite[p.~42]{bib:SU-fractional}) that $\chi_f(G)$ is rational and,
although it is defined as an infimum, there exists a fractional
colouring of size $\chi_f(G)$. Moreover, among the optimal fractional
colourings there exists a rational-valued one.

Fractional colourings may be viewed in several ways, each of which can
be useful in a different context. A basic observation concerning their
equivalence is given by the following lemma.
\begin{lemma}\label{l:basic}
  Let $G$ be a graph. The following are equivalent:
  \begin{enumerate}[\quad(i)]
  \item $\chi_f(G) \leq k$,
  \item there exists an integer $N$ and a multiset $\WW$ of $k\cdot N$
    independent sets in $G$, such that each vertex is contained in
    exactly $N$ sets of $\WW$,
  \item there exists a probability distribution $\pi$ on independent
    sets of $G$ such that for each vertex $v$, the probability that
    $v$ is contained in a random independent set (with respect to
    $\pi$) is at least $1/k$.\qed
  \end{enumerate}
\end{lemma}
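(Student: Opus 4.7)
The plan is to establish the cyclic chain $(\mathrm{ii}) \Rightarrow (\mathrm{iii}) \Rightarrow (\mathrm{i}) \Rightarrow (\mathrm{ii})$. The first two implications are routine. For $(\mathrm{ii}) \Rightarrow (\mathrm{iii})$, I would let $\pi$ be the uniform distribution on the multiset $\WW$, so that each vertex lies in exactly $N$ of the $kN$ sets and hence appears in a random independent set with probability $1/k$. For $(\mathrm{iii}) \Rightarrow (\mathrm{i})$, given a distribution $\pi$ as in $(\mathrm{iii})$, I would put $w(I) = k\pi(I)$; then $\size{w} = k$ and $w[v] = k \cdot \prob{v \in I} \geq 1$ for every vertex $v$, so $\chi_f(G) \leq k$.

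The substance lies in $(\mathrm{i}) \Rightarrow (\mathrm{ii})$. I would begin with a rational-valued optimal fractional colouring $w^*$ (which exists by the remark preceding the lemma) and rescale it by the factor $k/\size{w^*} \geq 1$, so that afterwards $\size{w^*} = k$ and every $w^*[v] \geq 1$; rationality is preserved. The key step is to modify $w^*$, without changing $\size{w^*}$, so that every $w^*[v]$ equals exactly $1$. For a vertex $v$ with $w^*[v] > 1$, I would \emph{thin} at $v$ by transferring, for each independent set $I$ with $v \in I$ and $w^*(I) > 0$, a rational amount $\alpha_I \in (0,w^*(I))$ of weight from $I$ to $I \setminus \Setx{v}$. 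Each such transfer preserves $\size{w^*}$ and leaves $w^*[u]$ unchanged for every $u \neq v$, while decreasing $w^*[v]$ by $\sum_I \alpha_I$; in particular, thinning operations at distinct vertices do not interfere. Choosing $\alpha_I = (1 - 1/w^*[v]) \cdot w^*(I)$ reduces $w^*[v]$ to exactly $1$ in a single vertex-wise round, so after one pass over all vertices every $w^*[v] = 1$ and $w^*$ is still rational.

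Once $\size{w^*} = k$ and $w^*[v] = 1$ for every $v$, I would take a common denominator $D$ of the finitely many nonzero values $w^*(I)$ and form the multiset $\WW$ by including $D \cdot w^*(I)$ copies of each independent set $I$ (the empty set, which may have acquired weight during thinning, is an allowed independent set). Then $\size{\WW} = D \cdot \size{w^*} = Dk$ and every vertex lies in exactly $D \cdot w^*[v] = D$ sets of $\WW$, so setting $N = D$ yields $(\mathrm{ii})$. I expect the thinning step to be the only delicate point, as it is the one place where we must pass from $w^*[v] \geq 1$ to $w^*[v] = 1$ exactly without violating rationality or altering $\size{w^*}$.
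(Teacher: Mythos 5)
Your proof is correct. The paper states this lemma with the \qed in the statement itself and gives no argument (deferring to the standard literature on fractional graph theory), so there is nothing to compare against; your cyclic chain is the standard one, and the only genuinely non-trivial step, the thinning in $(\mathrm{i})\Rightarrow(\mathrm{ii})$ that converts $w^*[v]\geq 1$ into $w^*[v]=1$ while preserving $\size{w^*}$ and rationality, is carried out correctly (the transfers at distinct vertices indeed do not interact, and the choice $\alpha_I=(1-1/w^*[v])\,w^*(I)$ does the job in one pass). One small point: the paper's definition insists that $w(I)\in[0,1]$, and both your rescaling in $(\mathrm{i})\Rightarrow(\mathrm{ii})$ and the assignment $w(I)=k\pi(I)$ in $(\mathrm{iii})\Rightarrow(\mathrm{i})$ can produce values exceeding $1$. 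In the first case this is harmless, since the upper bound on $w(I)$ plays no role in building the multiset; in the second, truncate any $w(I)>1$ to $1$ --- every vertex of such an $I$ is already covered by $I$ alone, so feasibility is preserved and $\size{w}$ only decreases.
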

For more details on fractional colouring, we refer the reader
to~\cite{bib:SU-fractional}.

The \emph{fractional chromatic index} $\chi'_f(G)$ of $G$ is defined
as the fractional chromatic number of the line graph $L(G)$. An
important result concerning this parameter follows from the work of
Edmonds~\cite{bib:Edm-maximum} (also see
Seymour~\cite{bib:Sey-multicolourings}):
\begin{theorem}\label{t:edge}
  The fractional chromatic index of a bridgeless cubic graph $G$
  equals 3. Equivalently, there is a multiset of $3N$ perfect
  matchings in $G$ such that each edge is contained in exactly $N$ of
  them.
\end{theorem}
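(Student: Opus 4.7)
The plan is to deduce this from Edmonds' characterization of the perfect matching polytope, which asserts that for any graph $G$, a nonnegative vector $x\in\mathbb R^{E(G)}$ lies in the convex hull of characteristic vectors of perfect matchings if and only if $\sum_{e\ni v} x_e = 1$ for every vertex $v$ and $\sum_{e\in\partial S} x_e \ge 1$ for every odd subset $S\subseteq V(G)$. Given this, the natural candidate is the uniform vector $x_e = 1/3$ for every edge $e$ of the bridgeless cubic graph $G$.

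First I would observe the trivial lower bound $\chi'_f(G)\ge 3$: the three edges incident with any vertex form a triangle in the line graph $L(G)$, so $\chi_f(L(G))\ge\omega(L(G))\ge 3$. Then for the upper bound, I would check that $x = \tfrac13 \mathbf 1$ satisfies Edmonds' conditions. The degree constraint is immediate since $G$ is cubic. For the odd cut constraint, note that for any odd $S\subseteq V(G)$ the cut $\partial S$ has the same parity as $\sum_{v\in S}\deg(v)=3|S|$, which is odd; since $G$ is bridgeless, $|\partial S|\ne 1$, so $|\partial S|\ge 3$ and hence $\sum_{e\in\partial S} x_e \ge 1$.

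Edmonds' theorem then yields a representation $x = \sum_i \lambda_i \chi_{M_i}$ with $\lambda_i\ge 0$, $\sum_i\lambda_i=1$, and each $M_i$ a perfect matching of $G$. Multiplying through by $3$ gives $\mathbf 1 = \sum_i (3\lambda_i)\,\chi_{M_i}$, so assigning weight $3\lambda_i$ to the matching $M_i$ (viewed as an independent set of $L(G)$) is a fractional colouring of $L(G)$ of total size $3$, confirming $\chi'_f(G)\le 3$.

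For the equivalent statement about multisets, I would invoke the rationality of optimal fractional colourings (mentioned earlier in the excerpt): since there is a rational-valued optimal fractional colouring, one can choose the $\lambda_i$ rational with a common denominator $N$, so that $\{M_i\}$ with multiplicities $3N\lambda_i$ forms a multiset $\WW$ of $3N$ perfect matchings in which each edge lies in exactly $N$ members. The main technical content thus rests entirely on Edmonds' polytope theorem; the only genuine step on the graph-theoretic side is the parity-plus-bridgeless argument showing that every odd edge cut has size at least $3$.
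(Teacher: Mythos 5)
Your argument is correct and is exactly the derivation the paper intends: Theorem~\ref{t:edge} is stated without proof and attributed to Edmonds' characterization of the perfect matching polytope, and your verification that $\tfrac13\mathbf 1$ satisfies the degree and odd-cut constraints (via the parity-plus-bridgeless observation that every odd cut has size at least $3$) is the standard way to deduce it. The passage to a rational convex combination and hence to the multiset formulation is also handled correctly.
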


The \emph{total graph} $T(G)$ of $G$ has vertex set $V(G)\cup E(G)$; a
pair $xy$ is an edge of $T(G)$ if one of the following holds:
\begin{itemize}
\item $x$ and $y$ are adjacent vertices of $G$,
\item $x$ is an edge of $G$ and $y$ is one of its endvertices,
\item $x$ and $y$ are incident edges of $G$.
\end{itemize}
 
Independent sets in $T(G)$ are called \emph{total independent sets} of
$G$. The \emph{total chromatic number} $\chi''(G)$ of $G$ is defined
as $\chi(T(G))$. Similarly, a \emph{fractional total colouring} of $G$
is simply a fractional colouring of $T(G)$, and we define the
\emph{fractional total chromatic number} $\chi''_f(G)$ of $G$ as
$\chi_f(T(G))$. 

Let us stress that when applying Lemma~\ref{l:basic} to total
fractional colourings, one has to work with total independent
sets. Thus, for instance, $\chi''_f(G) \leq k$ is equivalent to the
existence of $kN$ total independent sets in $G$ such that each vertex
and each edge are contained in $N$ of the sets.

Behzad~\cite{bib:Beh-graphs} and Vizing~\cite{bib:Viz-some}
independently conjectured the following upper bound on $\chi''(G)$:
\begin{conjecture}[Total colouring conjecture]
  \label{conj:total}
  For any graph with maximum degree $\Delta$, 
  \begin{equation*}
    \chi''(G)\leq\Delta+2.
  \end{equation*}
\end{conjecture}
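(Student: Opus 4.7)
The statement is Behzad and Vizing's Total Colouring Conjecture, which remains open for general $\Delta$, so my proof proposal is really an outline of the most natural attack rather than a recipe I expect to succeed. The plan is to start with a proper $(\Delta+1)$-edge-colouring $c_E$ of $G$, guaranteed by Vizing's theorem, and then to extend it to a total colouring by choosing a proper vertex colouring from the palette $\{1,\ldots,\Delta+2\}$ in which no vertex receives a colour appearing on one of its incident edges. For each $v \in V(G)$, let $L(v)$ be the set of colours in $\{1,\ldots,\Delta+2\}$ that do not appear on any edge incident with $v$; since $v$ has at most $\Delta$ incident edges, $|L(v)| \geq 2$ for every vertex. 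The problem thereby reduces to finding a proper list colouring of $G$ with lists of size at least $2$.

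Two-element lists are of course generally insufficient --- consider $K_3$ with lists $\{1,2\}$, $\{2,3\}$, $\{1,3\}$ --- so one must exploit the rigidity of how the lists arise from $c_E$. One promising angle is to recolour a small collection of edges in a controlled way so as to enlarge many lists to size at least $3$, leaving behind a well-structured remainder subgraph (for instance, one whose components are trees or even cycles) on which the surviving size-$2$ lists can still be properly coloured. An alternative is a probabilistic or entropy-compression argument in the spirit of Molloy and Reed: iteratively assign each vertex a random available colour and locally repair conflicts, arguing via the Lov\'asz Local Lemma that the process terminates with positive probability. In either case, one also has the flexibility to modify $c_E$ on Vizing fans or Kempe chains to reshape the lists where necessary.

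The main obstacle, and the reason the conjecture has resisted proof for decades, is the tightness of the additive constant~$2$. Probabilistic methods that yield $\Delta + o(\Delta)$, or even $\Delta + C$ for some absolute constant $C$, lose slack at every union bound and every application of the local lemma, so sharpening them to a constant as small as $2$ appears to require either a genuinely new concentration phenomenon or a structural dichotomy partitioning the problem by local density. The fractional analogue studied in the present paper, especially its high-girth variant conjectured by Reed, is a more accessible stepping stone, and it is reasonable to expect that any eventual proof of the integer conjecture will pass through a sharper understanding of the fractional parameter first.
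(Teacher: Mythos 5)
The statement you were asked about is not a theorem of the paper but Conjecture~\ref{conj:total}, the Behzad--Vizing Total Colouring Conjecture, which the authors explicitly state as an open problem and do not prove. The paper only records, as context, the partial results that are known: Molloy and Reed's bound $\chi''(G)\le\Delta+C$ for a large absolute constant $C$, and Kilakos and Reed's exact fractional analogue $\chi''_f(G)\le\Delta+2$ (Theorem~\ref{t:kilakos-reed}). So there is no proof in the paper against which to compare yours, and you are right to flag your submission as a speculative outline rather than an argument.

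Your proposed reduction is correct as far as it goes: fixing a $(\Delta+1)$-edge-colouring $c_E$ from a palette of $\Delta+2$ colours leaves every vertex a list of at least two available colours, so the conjecture would follow from a suitable list vertex colouring of $G$ from these lists. But, as you already note, lists of size $2$ are in general insufficient (your $K_3$ example is the right one), and the structure imposed on the lists by $c_E$ is not currently known to be strong enough to rescue the argument. Your suggestions of recolouring along Vizing fans or Kempe chains to enlarge lists, or of a local-lemma/entropy-compression approach \`a la Molloy--Reed, are the standard heuristics in the area; the genuine obstruction you identify --- that probabilistic methods lose too much slack to land on an additive constant as tight as $2$ --- is precisely why the conjecture remains open. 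There is no gap to point to in the sense of a flawed step, because you make no claim of completeness; the one thing worth adding is that the paper you are reading works entirely with the \emph{fractional} relaxation $\chi''_f$, where the corresponding bound $\Delta+2$ is actually a theorem, and then sharpens it to $\Delta+1$ under a girth hypothesis. That relaxation sidesteps the integrality issues your outline runs into, which is exactly why it is tractable and the integer conjecture is not.
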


Currently the best upper bound on $\chi''(G)$ in terms of the maximum
degree $\Delta$ of $G$ is due to Molloy and Reed~\cite{bib:MR-bound}
who proved that $\chi''(G)$ is bounded by $\Delta + C$ for a suitable
constant $C$.

Kilakos and Reed~\cite{bib:KR-fractionally} proved the analogue of
Conjecture~\ref{conj:total} for the fractional version of total
colouring:
\begin{theorem}
  \label{t:kilakos-reed}
  For any graph $G$ with maximum degree $\Delta$,
  \begin{equation*}
    \chi''_f(G) \leq \Delta+2.
  \end{equation*}
\end{theorem}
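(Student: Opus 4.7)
By Lemma~\ref{l:basic}, establishing $\chi''_f(G)\le\Delta+2$ amounts to exhibiting a probability distribution on total independent sets of $G$ in which every vertex and every edge of $G$ is contained with probability at least $1/(\Delta+2)$. The plan is to construct such a distribution by combining Vizing's theorem with fractional vertex colourings of suitable induced subgraphs.

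First, apply Vizing's theorem to partition $E(G)$ into $\Delta+1$ matchings $M_1,\ldots,M_{\Delta+1}$. For each $i$, let $U_i=V(G)\setminus V(M_i)$ denote the set of vertices missed by $M_i$, and fix a fractional vertex colouring $\nu_i$ of $G[U_i]$ of size at most $\Delta+1$. The sets of the form $M_i\cup I$ with $I$ an independent set of $G[U_i]$ are then total independent sets of $G$, so any joint law on pairs $(i,I)$ with $I\sim\nu_i$ induces a distribution on total independent sets. The candidate distribution is obtained by mixing such a two-stage sample with a pure fractional vertex colouring of $G$ of size $\chi_f(G)\le\Delta+1$ (which covers vertices but no edges), with mixing weights chosen so that both vertex and edge coverage probabilities meet the threshold $1/(\Delta+2)$.

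The main obstacle I foresee is that the naive uniform choice of $i$ yields edge coverage $1/(\Delta+1)$ but only $O(1/\Delta^2)$ coverage for each vertex, since a given vertex lies in $U_i$ for only a small number of indices and, conditional on that, is selected into $I$ with probability only $1/(\Delta+1)$. Restoring balance requires either reweighting the choice of $i$ in favour of matchings that miss the relevant vertex, or mixing in enough of the pure independent-set distribution to lift vertex coverage without dropping edge coverage below the threshold. The delicate point is that these reweightings introduce correlations between different vertices, and verifying that the calibrated mixture attains the sharp bound $\Delta+2$ (and not merely some $O(\Delta)$) is where the argument becomes technical. An averaging step over all $(\Delta+1)!$ relabellings of the edge colours, used to symmetrise, should reduce the analysis to coverage probabilities depending only on the degree of the relevant element of $T(G)$.

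A possibly cleaner route is via LP duality: reduce the claim to showing that, for every non-negative weight function $\omega$ on $V(G)\cup E(G)$, there is a total independent set $S$ with $\omega(S)\ge\omega(V(G)\cup E(G))/(\Delta+2)$. A probabilistic construction along the above lines, but adapted to $\omega$, should yield such an $S$ directly, sidestepping the explicit calibration of mixing weights.
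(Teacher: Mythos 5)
First, a point of reference: the paper does not prove Theorem~\ref{t:kilakos-reed} at all --- it is quoted from Kilakos and Reed \cite{bib:KR-fractionally} --- so your proposal has to stand on its own, and it does not. You correctly identify the central difficulty but then treat it as a matter of ``delicate calibration,'' when in fact it is fatal to the approach. To see this quantitatively, take $G$ to be $\Delta$-regular with $\chi'(G)=\Delta+1$, so that all $\Delta+1$ colour classes are nonempty and each vertex is missed by exactly one matching; the sets $U_1,\dots,U_{\Delta+1}$ then partition $V(G)$. An edge of $M_j$ belongs only to total independent sets of the form $M_j\cup I$, so the mass $m_j$ on that branch must be at least $1/(\Delta+2)$ for every $j$; hence $\sum_j m_j\geq(\Delta+1)/(\Delta+2)$ and the pure vertex-colouring branch gets mass $b\leq 1/(\Delta+2)$. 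A vertex $v$ is selected with probability $m_{i(v)}\cdot\mathbf{P}(v\in I\mid i=i(v))+b\cdot\mathbf{P}(v\in I_{\mathrm{pure}})$, and with $I$ drawn from a fractional colouring of $G[U_{i(v)}]$ of size about $\Delta+1$ this is $O(1/\Delta^2)$ --- short of the target by a factor of order $\Delta$, not by a constant. Neither rescue works: reweighting the choice of $i$ is useless because the $U_i$ partition $V(G)$ and every $m_j$ is already pinned near $1/(\Delta+2)$ by the edge constraints, and adding pure mass is blocked by $b\leq 1/(\Delta+2)$. To reach $1/(\Delta+2)$ one would need $\mathbf{P}(v\in I\mid i=i(v))$ close to $1$, i.e.\ $I$ containing almost all of $U_{i(v)}$, which is impossible as soon as $G[U_{i(v)}]$ has edges. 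Indeed, if some $G[U_j]$ contains a triangle, every set in every branch contains at most one of its three vertices, forcing $m_j+b\geq 3/(\Delta+2)$ and hence total mass at least $(\Delta+3)/(\Delta+2)>1$: the LP is infeasible for your family of sets. Your closing LP-duality paragraph merely restates the theorem in weighted form and supplies no construction, so it does not close the gap.

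The missing idea is that the total independent sets used must have a \emph{large} vertex part relative to the matching part --- maximal, or ``full'' in the sense of Section~\ref{sec:overview} --- so that, conditional on $M_j$ being the edge part, a vertex of $U_j$ is included with probability bounded away from $0$ independently of $\Delta$. Arranging this while keeping the edge probabilities balanced is precisely the hard content of the theorem: Kilakos and Reed achieve it for all graphs via a polyhedral/structural argument quite different from Vizing-plus-mixing, and the whole machinery of Sections~\ref{sec:prob}--\ref{sec:cubic} of this paper exists to achieve the stronger bound $\Delta+1$ under a girth hypothesis. As written, your construction proves only a bound of order $\Delta^2$ for the vertices, so it is a wrong approach rather than an incomplete proof of the right one.
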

Recently, Ito et al.~\cite{bib:IKR-characterization} showed that the
only graphs $G$ with $\chi''_f(G) = \Delta+2$ are $K_{2n}$ and
$K_{n,n}$ ($n\geq 1$).

As mentioned above, Reed~\cite{bib:Ree-talk} conjectured that high
girth makes the fractional total chromatic number close to $\Delta+1$:
\begin{conjecture}\label{conj:reed}
  For every $\varepsilon > 0$ and every integer $\Delta$, there exists $g$
  such that the fractional chromatic number of any graph with maximum
  degree $\Delta$ and girth at least $g$ is at most $\Delta + 1 +
  \varepsilon$. 
\end{conjecture}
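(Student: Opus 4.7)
The plan is to prove the stronger form $\chi''_f(G) = \Delta + 1$ (the lower bound $\chi''_f(G) \geq \Delta + 1$ being trivial, since every vertex of maximum degree together with its $\Delta$ incident edges forms a $(\Delta+1)$-clique in $T(G)$). By Lemma~\ref{l:basic}(iii), it suffices to exhibit a probability distribution $\pi$ on the total independent sets of $G$ in which every vertex and every edge is contained in a $\pi$-random set with probability at least $1/(\Delta+1)$.

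For $\Delta = 3$, I would first reduce to the bridgeless cubic case and then apply Theorem~\ref{t:edge} to draw a random perfect matching $M$ in which each edge appears with probability $1/3$. Conditional on $M$, the subgraph $G - M$ is a disjoint union of cycles, each of length at least $g$ by the girth hypothesis. On each such cycle $C$ one chooses, independently and at random, a ``local pattern'' of vertices and cycle-edges of $C$ to include in the total independent set $S$, together with a decision about which of the $M$-edges incident to $V(C)$ to retain. The patterns are to be designed so that averaging over the randomness in $M$ and in the per-cycle choices makes each vertex, each cycle-edge, and each $M$-edge have marginal inclusion probability exactly $1/4$. For even $\Delta$, one uses the 2-factorisation theorem of Petersen to decompose the edges of a $\Delta$-regular supergraph of $G$ into $\Delta/2$ 2-factors; each 2-factor is again a union of cycles of length at least $g$, and one plays the analogous game in which a randomly chosen 2-factor plays the role of $M$.

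The central obstacle is the design and analysis of the local random pattern on a long cycle $C$. The pattern must simultaneously produce the correct marginals for the vertices of $C$, for the cycle-edges of $C$, and for the $M$-edges incident to $V(C)$, and it must be compatible with the independent choices made on the cycle at the opposite endpoint of each $M$-edge. This compatibility requirement couples adjacent cycles through the shared $M$-edges, and the girth hypothesis is essential here: a sufficiently long cycle admits enough ``bulk'' that the boundary conditions coming from either end of each $M$-edge can be absorbed, leaving room to tune the interior marginals to precisely $1/(\Delta+1)$. Simple attempts fail: for instance, combining a random proper $3$-colouring of each $T(C)$ with $M$ as a fourth class underweights $M$-edges, while using $M$ itself as the total independent set underweights vertices, so the pattern must genuinely mix vertices, cycle-edges and $M$-edges on each cycle. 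Once a local pattern with the correct marginals is constructed, verifying the global inclusion probabilities reduces to a finite local calculation along a long path, from which the required distribution $\pi$ is obtained.
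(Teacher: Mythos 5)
Your outline reproduces the architecture of the argument actually used here (reduction to the bridgeless cubic case, Theorem~\ref{t:edge} to randomize the perfect matching, long cycles of the complementary 2-factor, Petersen's theorem for even $\Delta$), but what you label ``the central obstacle'' --- the design and analysis of the local random pattern on each long cycle --- is the entire mathematical content of the proof, and you have not supplied it. It is not enough to assert that a long cycle has ``enough bulk'' to absorb boundary conditions and that the marginals can be ``tuned to precisely $1/(\Delta+1)$.'' Two things in particular must be proved and are not. First, the choices on adjacent cycles are coupled through every chord (a vertex and its mate cannot both be selected), and these dependencies propagate: the decision at a vertex $u$ depends on its mate $u^*$, whose decision depends on \emph{its} neighbourhood, and so on. One needs a mechanism that breaks these chains --- here it is a random level function on a sparse set of ``boundary'' edges, an asymmetric inclusion rule comparing the levels of $u$ and $u^*$, and a girth bound large enough that the relevant dependency paths cannot close into cycles (Lemma~\ref{l:indep-main}) --- followed by a conflict-resolution step to repair the seams. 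None of this is automatic.

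Second, and more fundamentally, it is a genuine quantitative question whether \emph{any} such local scheme can give vertices marginal probability at least $1/(\Delta+1)$; the answer is yes but only barely, and verifying it requires solving the recurrence of Section~\ref{sec:prob} (yielding the limit $q^*\to 2\sqrt7-5\approx 0.2915>1/4$ for $\Delta=3$) and the ODE estimate of Proposition~\ref{p:beta} for even $\Delta$. Moreover, a single local pattern does \emph{not} produce equal marginals on vertices and edges: the construction here first yields a vertex probability strictly above $1/4$ and an edge probability strictly below it, and only then equalizes by mixing with a second distribution supported on perfect matchings (for the cubic case) or by a continuity argument in an auxiliary parameter $\xi$ (for the subcubic and even-degree cases). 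Your claim that the pattern itself can be tuned to exact equality elides this step. Finally, note that even a complete version of your argument would establish only the cases $\Delta=3$ and $\Delta$ even --- which is all that is proved here as well; the conjecture for odd $\Delta\geq 5$ is not addressed by this method and required the later work of Kardo\v{s}, Kr\'{a}l' and Sereni.
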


In the present paper, we prove a stronger form of the conjecture for
$\Delta = 3$ (we call graphs $G$ with maximum degree 3
\emph{subcubic}). The argument also applies for even $\Delta \geq 4$.

Our first main result is the following theorem:
\begin{theorem}
  \label{t:main}
  If $G$ is a subcubic graph of girth at least $15\,840$, then
  \begin{equation*}
    \chi''_f(G) = 4.
  \end{equation*}
\end{theorem}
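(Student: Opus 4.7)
The lower bound $\chi''_f(G) \geq 4$ is immediate: any vertex $v$ of degree three together with its three incident edges induces a $K_4$ in the total graph $T(G)$, and $\chi_f(K_4) = 4$. For the upper bound, by Lemma~\ref{l:basic}(iii) it suffices to exhibit a probability distribution on total independent sets of $G$ in which every vertex and every edge is contained in a random total independent set with probability at least $1/4$. A natural first step is to reduce to the case in which $G$ is cubic and bridgeless: vertices of degree at most two can be dealt with by embedding $G$ into a cubic supergraph of the same girth (for example, by joining two copies of $G$ through a suitable matching of the low-degree vertices), and the finitely many bridges can be handled by induction on their number.

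My plan is then to build the distribution via the following two-layer construction. First, by Theorem~\ref{t:edge}, pick a random perfect matching $M$ of $G$ from the distribution $\mu$ under which each edge lies in $M$ with probability exactly $1/3$. The complement $F = G \setminus M$ is a $2$-factor whose components are cycles, and since the girth of $G$ is at least $15\,840$, each of these cycles is very long. Second, conditional on $M$, a random total independent set is obtained by assigning one of four colours to each vertex and each edge of $G$, with $M$ playing the role of colour~$4$ and the remaining three colours used to produce a proper total colouring of $F$ that also respects the cross-edges of $M$, in the sense that the two endpoints of any $M$-edge receive distinct colours from $\{1,2,3\}$.

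If every cycle of $F$ had length divisible by three, each component of $F$ would admit a canonical proper total $3$-colouring, and translating this canonical pattern independently on each cycle by a uniform cyclic offset would give every vertex and every $F$-edge a uniform colour in $\{1,2,3\}$. The two endpoints of an $M$-edge would then disagree with probability $2/3$, and conflicts could be removed by deleting one of the two offending endpoints from its colour class. Combined with the colour class corresponding to $M$ (used with an appropriate weight balancing the $1/3$ marginal of Theorem~\ref{t:edge}), this would produce coverage at least $1/4$ for every element of $V(G) \cup E(G)$. The high-girth assumption enters because the deletions and any local resampling are concentrated around individual $M$-edges, so on each long cycle the amortized cost per vertex or edge can be absorbed in the $1/4$ bound.

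The main obstacle is that, in general, not every cycle of $F$ has length divisible by three, so one cannot directly impose a proper total $3$-colouring on each component of $F$. I would handle this by introducing on each such cycle a small bounded number of \emph{defect} positions at which colour~$4$ is used instead of a colour from $\{1,2,3\}$; the subtlety is that colour~$4$ is already occupied by $M$, so each defect position must be accompanied by a local modification of the colour-$4$ class (removing the $M$-edge incident with the defect vertex and re-routing it through one of the other three classes). Tuning the random shifts and local repairs so that every element of $V(G) \cup E(G)$ ends up with coverage exactly $1/4$, rather than $1/4-\varepsilon$, is the hardest part of the argument, and this is where the precise numerical girth bound $15\,840$ is used — essentially, it is the smallest girth at which the iterated defect/conflict analysis closes with equality.
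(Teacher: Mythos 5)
Your lower bound is correct, and the outermost structure of your plan (reduce to cubic bridgeless, pick a $2$-factor $F$ as the complement of a perfect matching from Theorem~\ref{t:edge}, build a total independent set that mostly lives on $F$, fix conflicts at the cross edges, then blend with a matching distribution) genuinely matches the paper. But the middle layer --- a canonical proper total $3$-colouring of each $F$-cycle shifted by a uniform cyclic offset --- is \emph{not} how the paper proceeds, and that is where your argument has a real gap rather than a gap you can expect to close by more careful bookkeeping.

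The obstruction you flag yourself is the fatal one: a cycle $C_n$ admits a proper total $3$-colouring if and only if $3 \mid n$, and you have no control over the lengths of the cycles in a $2$-factor of a high-girth cubic graph. You propose inserting a bounded number of ``defect'' positions using colour~$4$, but colour~$4$ is the matching class, and every defect vertex is incident to an $M$-edge; each defect therefore forces a cascade of local re-routings that interacts with the random offset in a way that destroys the uniformity of the marginals. More importantly, there is no mechanism in your plan to make the coverage \emph{exactly} $1/4$ rather than merely close to it; you acknowledge this but offer no idea for how to close it. The paper avoids the divisibility problem entirely: Sections~\ref{sec:prob}--\ref{sec:algorithm} do not impose any periodic pattern. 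Instead, a sparse set $B$ of ``boundary edges'' cuts each cycle into long paths, a random level $\lambda(e) \in \{1,\dots,k\}$ is assigned to each boundary edge, and the set $T$ is built by a deterministic sweep along each path from a random ``seed choice.'' The recurrence~\eqref{eq:q} is engineered so that the inclusion probabilities are the same on every path regardless of its length, and the boundary conflicts (Table~\ref{tab:conflicts}) are resolved locally. The parameter $\xi$ introduced at the end of Section~\ref{sec:subcubic} is what allows the vertex coverage to be tuned continuously to exactly $1/4$; your plan has no analogous free parameter.

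Two smaller points. First, your conflict resolution rule (``deleting one of the two offending endpoints from its colour class'') under-covers vertices: with weight $W$ on each of the three $F$-colour classes, deletion gives vertex coverage $(1-\rho)W$ where $\rho>0$ is the deletion probability, while keeping edge coverage at least $1/4$ forces $W\leq 1/4$, so vertices fall strictly below $1/4$. You would instead need to \emph{replace} both conflicting endpoints $v,v^*$ by the $M$-edge $vv^*$ (which is safe in that class because, in a proper total colouring, the $F$-edges at $v$ and $v^*$ have a different colour), so that every vertex stays covered by exactly one of $S_1,S_2,S_3$. Second, the value $15\,840$ is not ``the smallest girth at which the iterated defect/conflict analysis closes''; in the paper it is $15k\ell$ with $k=11$ and $\ell=96$, and $k,\ell$ are determined by the level-recurrence analysis of Lemma~\ref{l:sqrt}, the sparsity requirement $\ell\geq 83+3\Delta(Q)$ of Lemma~\ref{l:decompose}, and the independence lemma (Lemma~\ref{l:indep-main}), none of which appear in your scheme. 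Your approach, if it could be completed, would yield a completely different (and likely much smaller) girth threshold. Finally, the reduction ``join two copies of $G$'' does not handle the case where the total degree deficiency is odd; the paper treats the $D=1$ case separately and nontrivially, using the parameter $\xi$ and an auxiliary graph $H$ built from $s\geq g/2$ copies of $G-z$.
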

As noted above, this confirms a particular case of
Conjecture~\ref{conj:reed}. In
Sections~\ref{sec:prob}--\ref{sec:cubic}, we first prove
Theorem~\ref{t:main} for graphs $G$ which are cubic and bridgeless. In
Section~\ref{sec:subcubic}, the result is extended to subcubic
graphs. Finally, in Section~\ref{sec:even}, we prove our second main
result:
\begin{theorem}\label{t:even}
  For any \emph{even} integer $\Delta$, there exists a constant
  $g(\Delta)$ such that if $G$ is a graph with maximum degree $\Delta$
  and girth at least $g(\Delta)$, then $\chi''_f(G) = \Delta+1$.
\end{theorem}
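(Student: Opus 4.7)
The lower bound $\chi''_f(G)\ge\Delta+1$ is immediate: $T(G)$ contains the clique $K_{\Delta+1}$ formed by any vertex of maximum degree together with its $\Delta$ incident edges. The work lies in the upper bound. First we reduce to the $\Delta$-regular case by a girth-preserving regularization in the spirit of the subcubic-to-cubic reduction of Section~\ref{sec:subcubic}: embed $G$ as an induced subgraph of a $\Delta$-regular graph $G^*$ of girth at least $g(\Delta)$, noting that a fractional total colouring of $G^*$ restricts to one of $G$. Henceforth assume $G$ is $\Delta$-regular.

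Since $\Delta$ is even, Petersen's 2-factorization theorem partitions $E(G)$ into 2-factors $F_1,\dots,F_{\Delta/2}$; each $F_i$ is a disjoint union of cycles of length at least $g(\Delta)$. We construct a fractional total colouring of size $\Delta+1$ via Lemma~\ref{l:basic}(iii). The $\Delta+1$ colours are organized as a single ``vertex'' colour $0$ together with $\Delta/2$ disjoint pairs $\{2i-1,\,2i\}$, one pair per 2-factor. For each $i$, the pair $\{2i-1,2i\}$ carries distributions $\pi_{2i-1},\pi_{2i}$ on total independent sets of $G$ whose edge parts are matchings in $F_i$; together these form a fractional edge 2-colouring of $F_i$. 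Colour $0$ carries a distribution $\pi_0$ on total independent sets of $G$ that supplies the vertex coverage (built from a random independent set of $G$ of the kind produced by classical high-girth chromatic arguments) and absorbs any residual edge deficit from the 2-factor colourings.

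The central step is the edge 2-colouring of each $F_i$: on even cycles it is nothing more than two complementary perfect matchings, and on odd cycles we use the randomized construction of Sections~\ref{sec:prob}-\ref{sec:cubic} applied to the 2-regular graph $F_i$, producing an edge 2-cover with deficit $O(1/g(\Delta))$ per odd cycle. The sets in the supports of $\pi_{2i-1}$ and $\pi_{2i}$ may additionally carry a controlled sprinkling of vertices, independent in $G$, to contribute to vertex coverage. Verifying Lemma~\ref{l:basic}(iii) then reduces to checking that $\sum_c\Pr_{\pi_c}[v\in I]\ge 1$ for every vertex $v$ (contributions from colour $0$ and all $\Delta$ edge colours) and $\sum_c\Pr_{\pi_c}[e\in I]\ge 1$ for every edge $e\in F_i$ (contributions only from the three colours $0,2i-1,2i$). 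The principal obstacle, and the reason $g(\Delta)$ depends on $\Delta$, is achieving \emph{equality} $\chi''_f(G)=\Delta+1$ rather than a weaker $\Delta+1+\varepsilon$ bound: the $O(1/g(\Delta))$ odd-cycle deficits, summed over all $\Delta/2$ factors, must fit inside the slack that $\pi_0$ leaves after supplying vertex coverage. Taking $g(\Delta)$ sufficiently large in $\Delta$ shrinks the deficits enough, and the remaining coordination across the $\Delta+1$ colours mirrors the linear-programming-style balance already carried out in the cubic case.
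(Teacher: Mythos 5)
Your lower bound and the two reductions (to $\Delta$-regular graphs, then to a Petersen decomposition into 2-factors $F_1,\dots,F_{\Delta/2}$) match the paper. But the colouring scheme you build on top of them has a gap that is fatal for the \emph{exact} value $\Delta+1$. If $w$ is a fractional total colouring of size exactly $\Delta+1$ and $v$ is a vertex of degree $\Delta$, then summing $w[y]$ over the clique $Y$ consisting of $v$ and its $\Delta$ incident edges gives $\Delta+1\leq\sum_{y\in Y}w[y]=\sum_I w(I)\,\size{I\cap Y}\leq\size{w}=\Delta+1$, so \emph{every} total independent set in the support must meet every such clique exactly once, i.e.\ must be full in the sense of Section~\ref{sec:overview}. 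This destroys two load-bearing parts of your plan: (a) the colour-$0$ distribution cannot be ``a random independent set of the kind produced by classical high-girth chromatic arguments,'' because a set of vertices alone is never full (a vertex not in the set must be covered by an incident edge of the set), and (b) there is no slack anywhere for colour $0$ to ``absorb residual edge deficits'' from the odd cycles of the $F_i$ --- any non-full set in any support pushes the size strictly above $\Delta+1$, so deficits cannot be shuffled between colour classes. Once fullness is imposed, every colour class must be an independent vertex set $S$ together with a perfect matching of $G-S$, which is exactly the structure the paper's algorithm produces; your decomposition into ``one vertex colour plus $\Delta$ edge colours'' cannot be realized.

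The second, quantitative gap is that the feasibility of the whole construction hinges on an inequality you never address: one needs full total independent sets in which each vertex appears with probability at least $1/(\Delta+1)$ while each edge also appears with probability at least $1/(\Delta+1)$ (the vertex probability can then be tuned down to equality). The paper obtains this by running the level algorithm of Section~\ref{sec:algorithm} on each 2-factor with $\Delta-2$ mates per vertex, and Proposition~\ref{p:beta} proves $\beta>1/(\Delta+1)$ via an explicit differential-equation analysis of the recurrence~(\ref{eq:tildeq}); the final balancing uses the continuity of $\beta(\xi)$ in the damping parameter $\xi$. This is the heart of the proof for even $\Delta$, and it does not follow by ``mirroring the cubic case,'' since the cubic argument additionally relies on Theorem~\ref{t:edge}, which has no analogue here.
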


Following the initial submission of this paper, Kardo\v{s}, Kr\'{a}l'
and Sereni~\cite{bib:KKS-last} completed the proof of
Conjecture~\ref{conj:reed}, building on techniques developed here.


\section{Overview of the method}
\label{sec:overview}

We now present an overview of our method, restricting our attention to
the cubic bridgeless case. In the proof of Theorem~\ref{t:main}, the
required fractional total colouring is obtained indirectly, by
constructing a suitable probability distribution and using
Lemma~\ref{l:basic}.

To show that a cubic graph $G$ has $\chi_f''(G) = 4$, it suffices to
construct a probability distribution $\pi$ on total independent sets
such that each vertex and edge is included in a random total
independent set with probability at least $1/4$. Consider the set $Y$
consisting of a vertex of $G$ and the three edges incident with
it. Since any total independent set contains at most one object from
$Y$, we must ensure that every total independent set $T$ with $\pi(T)
> 0$ contains exactly one element of $Y$. We arrive at the following
definitions.

We will say that a set $X\subseteq V(G)\cup E(G)$ \emph{covers} a vertex
$v\in V(G)$ if $v\in X$ or $v$ is incident with an edge in $X$. A set
covering every vertex is \emph{full}. The set of full total independent
sets of $G$ will be denoted by $\Phi(G)$. 

For the reason outlined above, $\pi$ will assign nonzero probability
to full sets only. Under this provision, it is clear that if each
$x\in V(G)\cup E(G)$ has the same probability of being included in a
$\pi$-random total independent set, then $\chi''_f(G) = 4$.

The distribution is constructed by means of a probabilistic algorithm
described in Section~\ref{sec:algorithm}. The algorithm produces a
full total independent set $\tilde T$ for any given choice of an
(oriented) 2-factor $F$ in $G$. It will be observed that for a fixed
choice of $F$, all the edges of $F$ have the same chance of being
included in $\tilde T$. The probability of inclusion in $\tilde T$ is
also constant on the edges not in $F$, as well as on the vertices of
$G$. To ensure that the edges in $F$ get the same probability as those
not in $F$, we `average' using Theorem~\ref{t:edge} which guarantees
the existence of a multiset $\WW$ of perfect matchings such that every
edge is contained in one third of the members of $\WW$. By running the
algorithm with $F$ ranging over complements of all the perfect
matchings from $\WW$ and taking the average of the distributions thus
produced, we indeed make the probability constant on all of $E(G)$. It
will also be constant on $V(G)$, but the two constant values will not
be the same. Luckily, we will observe (using the results of
Section~\ref{sec:prob}) that the probability of inclusion for a vertex
is higher than for an edge. This will enable us to augment the
distribution to the desired one, essentially by taking a weighted
average with a distribution on perfect matchings obtained from
Theorem~\ref{t:edge}.

In the remainder of this section, we introduce some more notation and
terminology for later use.

An \emph{oriented 2-factor} in a graph $G$ is a 2-factor with a
specified orientation of each of its cycles. Assume an oriented
2-factor is chosen. For $v\in V(G)$, $v^-$ and $v^+$ denote the
precedessor and successor of $v$ on $F$ with respect to the given
orientation of $F$. Similarly, if $e\in E(F)$, then $e^-$ is the edge
that precedes $e$ on $F$ and $e^+$ is the edge that follows it. The
\emph{left} (\emph{right}) end of an edge or a subpath of $F$ is its
first (last) vertex with respect to the given orientation. 

A path with endvertices $u$ and $v$ will also be referred to as a
\emph{$uv$-path}.

We will occasionally need to speak about the distance between two
edges $e$ and $f$ of $G$. This is defined as the distance between $e$
and $f$ in the total graph $T(G)$. The distance of a vertex from an
edge is defined similarly. In particular, note that the distance
between an edge and its endvertex is 1.

For an integer $i$, we define the \emph{$i$-neighbourhood} $N_i(e)$ of
an edge $e\in E(G)$ as the set of all the vertices of $G$ whose
distance from $e$ is at most $i$, and all the edges with both
endvertices in $N_i(e)$. If $B$ is a set of edges, then $N_i(B)$ is
the union of all $N_i(e)$ as $e$ ranges over $B$.


\section{A recurrence}
\label{sec:prob}

The purpose of this section is to analyse two sequences of real
numbers, $p_k(i)$ and $q_k(i)$, needed later in
Section~\ref{sec:algorithm}. In that section, we will present an
algorithm that constructs a random total independent set $T$ in a
graph $G$ whose vertices and edges are divided into $k$ `levels'. It
will eventually turn out that the probability of the inclusion of a
vertex (edge, respectively) $x$ in the resulting total independent set
is $q_k(i)$ ($p_k(i)$, respectively), conditioned on $x$ being at
level $i$. We postpone the details to the next section.

Let $k$ be a positive integer. For $i = 1,\dots,k$, we define the
values $p_k(i)$ and $q_k(i)$ by the recurrence
\begin{align}
  2p_k(i) + q_k(i) &= 1,\notag\\
  q_k(i) &= p_k(i) \left( 1 - \frac1k - \frac1k\sum_{j=1}^{i-1}
    p_k(j)\right).\label{eq:q}
\end{align}
Observe that $p_k(1) = k/(3k-1)$ and $q_k(1) = (k-1)/(3k-1)$. We set
\begin{equation*}
  p_k^* = \sum_{i=1}^k \frac{p_k(i)}k \text{\quad and\quad}
  q_k^* = \sum_{i=1}^k \frac{q_k(i)}k\ .
\end{equation*}

We want to understand the values of $p_k(i)$ and $q_k(i)$ as $k$
becomes very large. In particular, we will need to know that $q_k^*
\geq 1/4$ for large enough $k$. It suffices to prove the following.

\begin{lemma}\label{l:sqrt}
  We have
  \begin{equation*}
    \lim_{k\to\infty} p_k^*= 3 - \sqrt{7}.
  \end{equation*}
\end{lemma}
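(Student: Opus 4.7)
The plan is to eliminate \(q_k\) from the system, recognise the resulting recurrence as a forward-Euler discretisation of a simple autonomous ODE, and read off the limit from the explicit solution at \(t = 1\).

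Substituting \(q_k(i) = 1 - 2p_k(i)\) into \eqref{eq:q} and solving for \(p_k(i)\) gives
\[
p_k(i) = \frac{1}{3 - \tfrac{1}{k} - \tfrac{1}{k}\sum_{j=1}^{i-1} p_k(j)}.
\]
Setting \(a_k(i) = \tfrac{1}{k}\sum_{j=1}^{i} p_k(j)\), so that \(a_k(0)=0\) and \(p_k^* = a_k(k)\), the recurrence rewrites as
\[
a_k(i) - a_k(i-1) \;=\; \frac{1}{k}\cdot\frac{1}{3 - \tfrac{1}{k} - a_k(i-1)},
\]
which, apart from the \(-1/k\) inside the denominator, is Euler's method with step \(h = 1/k\) for the initial value problem
\[
R'(t) = \frac{1}{3 - R(t)}, \qquad R(0) = 0, \qquad t \in [0,1].
\]
Separation of variables yields \((3-R)\,dR = dt\), hence the implicit solution \(3R(t) - \tfrac{R(t)^2}{2} = t\) and thus \(R(t) = 3 - \sqrt{9 - 2t}\); in particular \(R(1) = 3 - \sqrt{7}\), which is precisely the claimed limit, because \(p_k^* = a_k(k)\) corresponds to the grid point \(t = 1\).

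What remains is to justify that \(a_k(k) \to R(1)\). A priori bounds come for free: the condition \(q_k(i) \geq 0\) forces \(p_k(i) \leq \tfrac12\), so \(a_k(i) \leq \tfrac12\) for every \(i \leq k\); in particular all iterates stay uniformly bounded away from the singularity of the vector field at \(R = 3\). On \([0,\tfrac12]\) the map \(R \mapsto 1/(3-R)\) is smooth with a Lipschitz constant independent of \(k\), and a standard Gr\"onwall-style induction on \(i\) then gives \(\max_{0 \leq i \leq k} |a_k(i) - R(i/k)| = O(1/k)\); the \(-1/k\) denominator perturbation affects only the local truncation constant. Specialising to \(i = k\) yields \(p_k^* \to 3 - \sqrt{7}\). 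The one conceptual step is spotting that the partial sums satisfy an Euler scheme, after which the main potential obstacle is simply checking that the denominator perturbation and the usual Euler error combine to \(O(1/k)\) — routine once the uniform bound \(a_k(i) \leq \tfrac12\) is in hand.
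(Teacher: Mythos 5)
Your proposal is correct, and while it shares the paper's central idea -- recognising the recurrence as a discretisation of an ODE and reading off the answer from the explicit solution -- the route through the details is genuinely different and, in the convergence step, cleaner. The paper interpolates the values $p_k(i)$ themselves by piecewise linear functions $h_k$, asserts (without proof) that $h_k$ converges pointwise, invokes the Arzel\`{a}--Ascoli theorem to upgrade this to uniform convergence of a continuous limit $f$ satisfying $f'=f^3$, $f(0)=1/3$, and finally evaluates $p_k^*$ as a Riemann sum converging to $\int_0^1 f$. You instead take the cumulative sums $a_k(i)=\tfrac1k\sum_{j\le i}p_k(j)$ as the state variable, so that the recurrence becomes a textbook forward Euler scheme for $R'=1/(3-R)$, $R(0)=0$, and the quantity of interest is the single grid value $a_k(k)\to R(1)=3-\sqrt7$; the two ODEs are of course equivalent via $R=\int_0^{\,\cdot} f$, i.e.\ $f'=R''=R'/(3-R)^2=f^3$. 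What your formulation buys is that the convergence $a_k(k)\to R(1)$ follows from the standard $O(1/k)$ Euler error estimate once the iterates are known to stay in a region where the vector field is Lipschitz, which replaces the paper's unproved pointwise-convergence claim and its compactness argument with a self-contained induction. One small point you should make explicit: $q_k(i)\ge 0$ is not an assumption but must be verified from the recurrence -- a short induction shows $p_k(j)\in[\tfrac13,\tfrac12]$ for all $j$ (hence $a_k(i)\le\tfrac12$ and the denominator $3-\tfrac1k-a_k(i-1)\ge 2$ for $k\ge 2$), after which your a priori bound and the Gr\"onwall-style error estimate go through exactly as you describe.
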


\begin{proof}
  For each $k\geq 1$, consider the piecewise linear function $h_k(x)$
  on the interval $[0,1]$ satisfying
  \begin{equation*}
    h_k\left(\frac{i-1}{k-1}\right) = p_k(i)
  \end{equation*}
  for $i=1,\dots,k$, and linear on each interval
  $[\tfrac{i-1}{k-1},\tfrac{i}{k-1}]$. It can be shown that for fixed
  $x\in [0,1]$, the sequence $(h_k(x))_{k=1}^\infty$ converges; we
  define $f(x)$ to be its limit. By the Arzel\`{a}--Ascoli theorem
  (see, e.g.,~\cite[p.~169]{bib:Roy-real}), the resulting function $f$
  on $[0,1]$ is continuous and the convergence of $h_k$ to $f$ is
  uniform.

  The sum $p_k^*$ can be viewed as a Riemann sum which approaches
  $\int_0^1\! f(x)\,dx$ as $k$ tends to infinity. Combining the two
  equations in \eqref{eq:q}, we obtain
  \begin{equation*}
    p_k(i) = \frac{k}{3k-1-\sum_{j=1}^{i-1}p_k(j)},
  \end{equation*}
  which implies that consecutive values of $p_k$ are related by the
  equation
  \begin{equation*}
    \frac 1{p_k(i)}- \frac 1{p_k(i+1)} = \frac{p_k(i)}k.
  \end{equation*}
  From this, we compute
  \begin{equation*}
    p_k(i+1)-p_k(i) = \frac{p_k(i)^3}{k-p_k(i)^2}.
  \end{equation*}
  In the limit, as $k\to\infty$, $p_k(i+1)-p_k(i)$ approximates
  $f'(x)/k$.  Thus $f(x)$ satisfies the differential equation
  \begin{equation*}
    f'(x)=\lim_{k\to\infty}\frac{f(x)^3}{1-f(x)^2/k}= f(x)^3.
  \end{equation*}
  In view of the observation that $p_k(1) = k/(3k-1)$, which leads
  to the initial condition $f(0)=1/3$, the solution to this
  differential equation is $f(x)=(9-2x)^{-1/2}$.  The result follows
  immediately, since
  \begin{equation*}
    \lim_{k\to\infty} p^*_k = 
    \lim_{k\to\infty}\sum_{i=1}^{k}\frac{p_k(i)}k=\int_0^1\! f(x)\, dx = 
    \left[-\sqrt{9-2x}\right]_0^1 = 3-\sqrt7.
  \end{equation*}
\end{proof}


\section{An algorithm}
\label{sec:algorithm}

Let $G$ be a cubic graph. Throughout this and the following section,
we assume that $G$ has girth at least $15k\ell$, where $k$ and $\ell$
are sufficiently large integers which will be determined in the proof
of Lemma~\ref{l:weights2}. The notation $p_k(i)$ and $q_k(i)$ of
Section~\ref{sec:prob} will be abbreviated to $p(i)$ and $q(i)$ as $k$
is fixed throughout the exposition.

Fix an oriented 2-factor $F$. A set $B\subseteq E(G)$ will be said to
be \emph{$r$-distant} (where $r$ is an integer) if the distance
between any two of its edges in $G$ is at least $r$. Furthermore, $B$
is \emph{$(F,\ell)$-sparse} if it is 4-distant and $F-B$ consists of
paths whose length is at least $\ell$ and at most $7\ell$. Observe
that by the above assumptions, each cycle of $F$ contains at least two
edges from any $(F,\ell)$-sparse set.

Let $B\subseteq E(F)$ be an $(F,\ell)$-sparse set of edges. In this
section, we describe a probabilistic algorithm producing a full total
independent set $\tilde T = \tilde T(F,B)$.

The \emph{mate} $v^*$ of a vertex $v\in V(G)$ is the neighbour of $v$
in $G-E(F)$. The edges in $B$ will be referred to as \emph{boundary
  edges}.

\begin{phase}
  We construct an intermediate set $T = T(F,B) \subseteq V(T(G))$ with
  the property that for any component $P$ of $F-B$, the vertices and
  edges of $T$ contained in $P$ constitute a total independent set
  (although $T$ as a whole need not be total independent).
\end{phase}

Make a uniformly random choice of a function $\map \lambda B
{\Setx{1,\dots,k}}$, assigning a \emph{level} $\lambda(e)$ to each
edge $e\in B$. The notion of a level is extended to each vertex or
edge $x\in V(F)\cup E(F)$ by defining $\lambda(x)$ to be the level of
the closest boundary edge in the direction opposite to the prescribed
orientation of $F$. If $Q$ is a component of $F-B$, we define
$\lambda(Q)$ as the level of any vertex of $Q$.

Let $e^1,\dots,e^m$ be an ordering of the boundary edges such that
$\lambda(e^i) \leq \lambda(e^j)$ if $i < j$.

We construct the set $T$ in a sequence of steps, starting with
$T=\emptyset$. At step $i$ ($1\leq i \leq m$), we process the boundary
edge $e^i$ and the path $P^i$ of $F-B$ following $e^i$ (with respect
to the selected orientation of $F$). Enumerate the vertices and edges
of $P^i$ as $u^i_0,e^i_1,u^i_1,e^i_2,\dots,u^i_r$, where the order of
the vertices $u^i_j$ and the edges $e^i_j$ is again based on the
orientation of $F$. To make the notation more uniform, we may write
$e^i = e^i_0$. In the following discussion, we drop the superscript
$i$.

\begin{sloppypar}
  For the purpose of the description below, we consider the endvertex
  of $e_0$ different from $u_0$ to be a new \emph{virtual} vertex
  $u_{-1}$, and make $u_{-1}$ incident with a virtual edge
  $e_{-1}$. The construction will proceed along the `path' $e_{-1},
  u_{-1}, e_0, u_0, e_1,\dots, u_r$. The vertex $u_{-1}$ and the edge
  $e_{-1}$ are in no relation to the actual vertex and edge preceding
  $e_0$ (namely, $u_0^-$ and $e_0^-$).
\end{sloppypar}

Let $t$ be the level of $e_0$. We first make a \emph{seed choice} for
the path $P^i$, randomly deciding about the status of the virtual edge
$e_{-1}$ and the virtual vertex $u_{-1}$:
\begin{itemize}
\item with probability $p(t)$, we consider $e_{-1}$ to be in $T$,
\item with probability $q(t)$, we consider $u_{-1}$ to be in $T$,
\item with probability $p(t) = 1 - p(t) - q(t)$, neither of the above
  happens.
\end{itemize}
The choice is independent of the seed choices for the other paths
$P^{i'}$.
  
The rest of the process for the path $P^i$ is deterministic. Let
$j\geq 0$. We specify whether $e_j$ or $u_j$ will be included in $T$,
assuming that the status of $e_s$ and $u_s$ ($s < j$) has been
decided. 

The edge $e_j$ will be added to $T$ if and only if 
\begin{equation}
  \label{eq:incl-edge}
  e_{j-1}\notin T \text{ and } u_{j-1}\notin T.
\end{equation}
(For $j=0$, these events refer to the result of the seed choice.) The
vertex $u_j$ will be included in $T$ if and only if both of the
following hold:
\begin{gather}
  \label{eq:incl-vertex1}
  u_{j-1}\notin T \text{ and } e_j\notin T,\\
  \label{eq:incl-vertex2}
  (u_j^*\notin T \text{ and } \lambda(u_j^*) < \lambda(u_j)) \text{ or
  } \lambda(u_j^*) > \lambda(u_j).
\end{gather}

After all of $P^i$ is processed according to these rules, step $i$ is
completed and if $i<m$, we proceed to the boundary edge $e^{i+1}$.

Once we have completed all $m$ steps, we have obtained the set $T$. It
is not necessarily a total independent set, since the random decision
on $e^i$ and $u^i_0$ did not take into account the real status of the
edge and vertex preceding them in $F$. There can be a similar problem
at the end of the path $P^i$ and, furthermore, the last vertex of
$P^i$ may not be covered by $T$. Before we resolve these problems and
construct the full total independent set $\tilde T$, we analyse the
probability that a given vertex or edge is contained in $T$.

We first derive a lemma concerning the independence of certain
events. Assume that $P$ is a path from $u$ to $v$ in $G$, where
$u,v\in V(G)$. We consider $P$ as directed from $u$ to $v$. We say
that $P$ is \emph{rightward} if $P$ contains no edge in $B$, and the
direction of each edge of $P$ contained in $F$ matches the orientation
of $F$. Given a function $\map \lambda {V(T(G))} {\Setx{1,\dots,k}}$,
we will say that $P$ is \emph{$\lambda$-ascending} if it is rightward
and for every edge $xy$ of $P$ that is not contained in $F$, we have
$\lambda(x) < \lambda(y)$. If there is a $\lambda$-ascending path from
$u$ to $v$, we write $u<_\lambda v$. Since $B$ is $(F,\ell)$-sparse,
the length of a $\lambda$-ascending path is at most $7k\ell + k-1$.

\begin{lemma}
  \label{l:indep-main}
  Let $u,v\in V(G)$ and let $s,t\in\Setx{1,\dots,k}$. Assume that a
  2-factor $F$ and a set $B$ of boundary edges are fixed and that $u$
  and $v$ are not contained in the same component of $F-B$. If $G-u^*$
  contains a rightward $uv$-path $P_{uv}$ of length at most $\ell$,
  then the following hold:
  \begin{enumerate}[\quad(i)]
  \item the events $u\in T$ and $\lambda(v) = s$ are conditionally
    independent provided that $\lambda(u) = t$,
  \item if $s < t$, then the events $u\in T$ and $v\in T$ are
    conditionally independent provided that $\lambda(u) = t$ and
    $\lambda(v) = s$,
  \item if $s < t$, then the events $uu^+\in T$ and $v\in T$ are
    conditionally independent provided that $\lambda(u) = t$ and
    $\lambda(v) = s$ (recall that $u^+$ denotes the successor of $u$
    on $F$).
  \end{enumerate}
\end{lemma}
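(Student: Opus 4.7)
The plan is to argue that, after conditioning on $\lambda(u)=t$, the event $u\in T$ is determined by random variables supported in a region around $u$ that cannot reach the left boundary $b_v$ of the component $P_v$ of $F-B$ containing $v$. I would first describe the recursion underlying the determination of $u\in T$: processing $P_u$ from its left end $u_0$ toward $u=u_j$, the algorithm reads $\lambda(u_s^*)$ for each $s\leq j$ and, when $\lambda(u_s^*)<t$, recurses on $u_s^*\in T$ inside the component containing $u_s^*$. The set of components visited in this recursion (the \emph{footprint} of $u$) has the property that levels strictly decrease along recursive calls; together with the fact that each component of $F-B$ has length at most $7\ell$, this bounds its depth by $k$ and places every visited component within distance at most $8k\ell$ of $u$ in $G$.

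For (i), suppose towards a contradiction that $\lambda(b_v)$ can change the outcome $u\in T$. Then $P_v$ is in the footprint of $u$ and is visited at some vertex $w$. The recursion walk from $u$ to $w$, concatenated with the portion of $P_v$ from $w$ to $v$ and with the reverse of the rightward $uv$-path $P_{uv}$, produces a closed walk through $u$ of length at most $8k\ell+7\ell+\ell$, strictly less than the girth $15k\ell$ when $k$ and $\ell$ are sufficiently large. Any closed walk shorter than the girth is null-homotopic in $G$, so every oriented edge must be traversed the same number of times in each direction. But $P_{uv}\subseteq G-u^*$ is rightward, so its first edge is necessarily $uu^+$ oriented $u\to u^+$; meanwhile the recursion walk leaves $u$ along $uu^-$ (if $j\geq 1$) or along the mate edge $uu^*$ (if $j=0$), and by the girth hypothesis it cannot revisit $P_u$ later. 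Hence $uu^+$ is traversed exactly once, only in the direction $u\to u^+$, contradicting null-homotopy.

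For (ii), the same scheme applies: a dependence between $u\in T$ and $v\in T$ given both levels must be witnessed by a component $P'$ shared between the footprints of $u$ and $v$, which yields a walk from $u$ to some $x\in P'$, a walk from $v$ to some $y\in P'$, a connecting segment from $x$ to $y$ inside $P'$, and the reverse of $P_{uv}$; the total length is again kept below the girth for large enough $k,\ell$. Crucially, the hypothesis $s<t$ ensures that every component in the footprint of $v$ has level at most $s<t$, so $P_u$ is not in $v$'s footprint and $v$'s recursion never traverses the edge $uu^+\in E(P_u)$. The same imbalance at $uu^+$ then yields a contradiction. Part (iii) follows by applying the identical scheme to $uu^+\in T$, whose determining recursion also begins at $u$ (via $u\notin T$ and $e_j^i\notin T$) and leaves the edge $uu^+$ itself untouched.

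The main obstacle I anticipate is the precise bookkeeping of the recursion footprint: in particular, distinguishing mate-queries that merely read $\lambda(w^*)$ when $\lambda(w^*)\geq t$ from those that recursively inspect $w^*\in T$, and carefully handling the endpoint case $j=0$ where the first recursive step is the mate edge $uu^*$ rather than $uu^-$. One must also verify in (ii)--(iii) that the choice of constants in the girth bound $15k\ell$ together with the $(F,\ell)$-sparseness really does keep the resulting closed walks shorter than the girth.
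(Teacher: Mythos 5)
Your proposal is correct and follows essentially the same route as the paper: your ``recursion footprint'' is precisely the paper's set $\PP(z)$ of vertices $w$ with $w<_\lambda z$ or $w^*<_\lambda z$, and your parity argument at the edge $uu^+$ (which the determining walks never traverse, since they enter $u$ only via $u^-u$ or $u^*u$, whereas the rightward path $P_{uv}\subseteq G-u^*$ must leave $u$ along $uu^+$) is exactly the paper's observation that the union of these paths contains a cycle shorter than the girth. One small correction: your footprint radius $8k\ell$ is too loose for part (ii), where two footprint walks occur and would give $16k\ell>15k\ell$; you need the sharper bound $7k\ell+k-1$ on the length of a $\lambda$-ascending path (at most $k$ components of length at most $7\ell$ joined by at most $k-1$ mate edges), which yields $2(7k\ell+k-1)+7\ell+\ell<15k\ell$ for the relevant $k,\ell$, as in the paper.
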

\begin{proof}
  We start with an important observation. Suppose that, in our
  algorithm, the random choice of a function $\map \lambda {V(T(G))}
  {\Setx{1,\dots,k}}$ has been made. In this situation, we can
  correctly decide whether a vertex $z\in V(G)$ is included in the set
  $T(F,B)$ based on the following information:
  \begin{itemize}
  \item the level $\lambda(w)$ of every vertex $w$ such that $w
    <_\lambda z$ or $w^* <_\lambda z$ (observe that this includes the
    vertex $z^*$), and
  \item the result of the seed choice for every path containing a
    vertex $w$ such that $w <_\lambda z$.
  \end{itemize}

  We now prove (i). Let
  \begin{align*}
    \PP(z) =
    \{&w\in V(G):\,w <_\lambda z \text{ or } w^* <_\lambda z\\
    &\text{for some $\lambda$ such that $\lambda(u)=t$}\}.
  \end{align*}
  We claim that in the component $Q$ of $F-B$ containing $v$, there is
  no vertex $v'$ such that $v'\in\PP(u)$. Suppose the
  contrary. Assuming first that $v'<_\lambda u$ for some $\lambda$, we
  choose a $\lambda$-ascending $v'u$-path $P_{v'u}$ for a suitable
  $\lambda$. Observe that since $P_{v'u}$ does not contain the edge
  following $u$ in $F$ while $P_{uv}$ does, the union $P_{v'u} \cup
  P_{uv} \cup Q$ contains a cycle. Furthermore, the length of the
  cycle is at most $(7k\ell+k-1)+\ell+7\ell$, which is less than
  $15k\ell$ whenever $k,\ell \geq 2$ (which will be the case). This
  contradicts the girth assumption. The proof for the case $(v')^*
  <_\lambda u$ is similar.
  
  Since we can decide about $u\in T$ without the knowledge of
  $\lambda(Q)$, and the choice of $\lambda(Q)$ is independent of all
  the other random choices made during the execution of the algorithm,
  the assertion follows.

  The proofs of (ii) and (iii) are similar; we only prove (ii). For a
  vertex $z$, we define
  \begin{align*}
    \PP'(z) =
    \{&w\in V(G):\,w <_\lambda z \text{ or } w^* <_\lambda z\\
    &\text{for some $\lambda$ such that $\lambda(u)=t$ and $\lambda(v)
      = s$}\}
  \end{align*}
  and note that the knowledge of the levels of vertices in $\PP'(z)$
  and the seed choices for the respective paths suffice for the
  decision whether $z\in T(F,B)$ under the assumption that
  $\lambda(u)=t$ and $\lambda(v) = s$.

  We claim that $\PP'(u) \cap \PP'(v) = \emptyset$. Suppose the
  contrary. Then there exists $w\in V(G)$ such that for suitable
  functions $\mu$ and $\lambda$, the following holds:
  \begin{itemize}
  \item $w<_\mu u$ or $w^*<_\mu u$, and
  \item $w<_\lambda v$ or $w^*<_\lambda v$.
  \end{itemize}
  By symmetry, $w$ may be assumed to be chosen such that $w<_\mu
  u$. Assume further that $w<_\lambda v$. Let $P_{wv}$ be a
  $\lambda$-ascending path from $w$ to $v$. Since $\lambda(u) >
  \lambda(v)$, $u$ is not contained in $P_{wv}$. There is a
  $\mu$-ascending path to $u$ from either $w$ or $w^*$ which
  determines a rightward $wu$-path $P_{wu}$. Unlike $P_{uv}$, this
  path does not contain the edge of $F$ following $u$, so $P_{uv} \cup
  P_{wu} \cup P_{wv}$ contains a cycle, the length of which is at most
  $\ell+1+2(7k\ell+k-1) < 15k\ell$ (whenever $k,\ell \geq 3$, which
  will be the case). This is a contradiction. The case that
  $w^*<_\lambda v$ is similar.

  Since the sets $\PP'(u)$ and $\PP'(v)$ are disjoint, the events
  $u\in T$ and $v\in T$ depend on disjoint sets of independent random
  choices, and they are therefore conditionally independent under the
  assumption that $\lambda(u)=t$ and $\lambda(v)=s$.
\end{proof}

In the proof of Lemma~\ref{l:v-e-prob} below, we will need a standard
fact on conditional probability (which is easily verified by direct
computation): 

\begin{lemma}[Rule of contraction for conditional probability]
  \label{l:contraction}
  Let $A$, $B$, $C$, $D$ be random events. Assume that:
  \begin{enumerate}[\quad(1)]
  \item $A$ and $B$ are conditionally independent given $C \wedge D$,
    and
  \item $A$ and $C$ are conditionally independent given $D$.
  \end{enumerate}
  Then $A$ is conditionally independent of $B \wedge C$ given $D$.
\end{lemma}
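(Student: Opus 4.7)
The plan is to verify the desired conditional independence by a direct chain-rule computation, showing that
\[
\prob{A \wedge B \wedge C \mid D} = \prob{A \mid D}\cdot \prob{B \wedge C \mid D},
\]
which is exactly the definition of $A$ being conditionally independent of $B \wedge C$ given $D$. Throughout, I assume the relevant conditioning events have positive probability, so that all conditional probabilities are well defined (the edge cases of measure zero are handled by declaring both sides of the identity to be zero).

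First I would apply the chain rule to the left-hand side in order to isolate a factor conditioned on $C \wedge D$:
\[
\prob{A \wedge B \wedge C \mid D} = \prob{A \wedge B \mid C \wedge D}\cdot \prob{C \mid D}.
\]
Next, hypothesis (1) says precisely that $\prob{A \wedge B \mid C \wedge D} = \prob{A \mid C \wedge D}\cdot \prob{B \mid C \wedge D}$, so the expression becomes
\[
\prob{A \mid C \wedge D}\cdot \prob{B \mid C \wedge D}\cdot \prob{C \mid D}.
\]
Now hypothesis (2) gives $\prob{A \mid C \wedge D} = \prob{A \mid D}$, and a second application of the chain rule folds the remaining two factors back together:
\[
\prob{B \mid C \wedge D}\cdot \prob{C \mid D} = \prob{B \wedge C \mid D}.
\]
Combining these three observations yields the claimed identity.

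There is no real obstacle here; the only thing to be careful about is the standard technicality that the manipulations implicitly require $\prob{C \wedge D} > 0$ and $\prob{D} > 0$. If $\prob{C \wedge D} = 0$ then $\prob{A \wedge B \wedge C \mid D}$ and $\prob{B \wedge C \mid D}$ both vanish, and the identity holds trivially; the case $\prob{D} = 0$ is excluded by the very notion of conditioning on $D$. Thus the lemma reduces to two applications of the chain rule sandwiching the two given conditional independence hypotheses.
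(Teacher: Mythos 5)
Your proof is correct, and it matches the paper's intent exactly: the paper states this lemma without proof, describing it as "easily verified by direct computation," and your chain-rule computation (including the careful handling of the degenerate case $\prob{C\wedge D}=0$) is precisely that verification.
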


The following lemma is a fundamental observation on the behaviour of
the algorithm described in this section.

\begin{lemma}\label{l:v-e-prob}
  Let $u\in V(G)$ and $e\in E(F)$ and let
  $t\in\Setx{1,\dots,k}$. Then:
  \begin{enumerate}[\quad(i)]
  \item $\cprob{e\in T}{\lambda(e) = t} = p(t)$,
  \item $\cprob{u\in T}{\lambda(u) = t \wedge \lambda(u^*) > t} = p(t)$,
  \item $\cprob{u\in T}{\lambda(u) = t \wedge \lambda(u^*) = t} = 0$,
  \item $\cprob{u\in T}{\lambda(u) = t} = q(t)$,
  \end{enumerate}
\end{lemma}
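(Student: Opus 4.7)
The plan is to prove (i)--(iv) jointly by induction on the level $t$, using Lemma~\ref{l:indep-main} throughout to supply the necessary conditional independences between events on the current path and events involving the mate $u_j^*$. Part (iii) is immediate from condition~(8): when $\lambda(u_j^*) = \lambda(u_j)$ both disjuncts of~(8) fail, so $u_j \notin T$ with probability one.

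For (i) and (ii) at level $t$, I would fix a path $P^i$ at level $t$ and run a sub-induction on the position $j$ along $e_{-1}, u_{-1}, e_0, u_0, \ldots, u_r$. A short case analysis of the rules shows that the joint status at each $j$ lies in one of three mutually exclusive events, $E_j = \{e_j \in T\}$, $U_j = \{u_j \in T\}$, and $N_j = \{e_j, u_j \notin T\}$, and that the evolution is Markov-like: $N_{j-1} \to E_j$ and $U_{j-1} \to N_j$ are deterministic, whereas from $E_{j-1}$ one moves to $U_j$ if condition~(8) at $u_j$ holds and to $N_j$ otherwise. The seed choice initialises the $j=-1$ distribution at $(p(t), q(t), p(t))$, which I claim is the stationary distribution of this chain. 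Granted this, (i) follows from $\Pr[e_j \in T \mid \lambda = t] = \Pr[N_{j-1}] = p(t)$, and (ii) follows from the observation that when $\lambda(u_j^*) > t$ condition~(8) is automatic, so $\{u_j \in T\}$ reduces to $E_{j-1}$, with Lemma~\ref{l:indep-main}(i) used to drop the conditioning on $\lambda(u_j^*)$.

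The technical heart is to verify that the one-step transition probability $\alpha(t) := \Pr[M_j \mid E_{j-1},\, \lambda(u_j) = t]$, where $M_j$ denotes the event that condition~(8) holds at $u_j$, equals $q(t)/p(t) = 1 - 1/k - (1/k)\sum_{s<t} p(s)$, as required by the recurrence~(\ref{eq:q}). Applying Lemma~\ref{l:indep-main} to detach $E_{j-1}$ from the data of $u_j^*$ yields
\[
\alpha(t) = \frac{k-t}{k} + \frac{1}{k}\sum_{s<t}\Pr[u_j^* \notin T \mid \lambda(u_j^*) = s,\ \lambda(u_j) = t].
\]
The crucial observation, and what I expect to be the main obstacle, is that the inner probability is \emph{not} $1 - q(s)$, which a naive appeal to (iv) at level $s$ would give. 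Conditioning on $\lambda(u_j) = t > s$ is genuinely informative: it is exactly the hypothesis of (ii) at level $s$ applied to the vertex $u_j^*$ (whose mate is $u_j$), so by the inductive hypothesis $\Pr[u_j^* \in T \mid \lambda(u_j^*) = s,\, \lambda(u_j) = t] = p(s)$. Substituting $1 - p(s)$ into the sum and simplifying gives $\alpha(t) = 1 - 1/k - (1/k)\sum_{s<t} p(s) = q(t)/p(t)$, as required to close the sub-induction. Finally, (iv) falls out by averaging (i)--(iii) over the uniform $\lambda(u^*) \in \{1,\ldots,k\}$, the recurrence~(\ref{eq:q}) collapsing the resulting sum to exactly $q(t)$.
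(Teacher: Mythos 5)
Your proposal is correct and follows essentially the same route as the paper: a double induction on (level, position), with part (iii) immediate, parts (i) and (ii) from the deterministic propagation rules and the stationarity of $(p(t),q(t),p(t))$, and the key subtlety correctly identified — namely that $\Pr[u_j^*\notin T\mid \lambda(u_j^*)=s,\ \lambda(u_j)=t]$ for $s<t$ must be evaluated via part (ii) at level $s$ (giving $1-p(s)$), not via part (iv) (which would wrongly give $1-q(s)$). The only thing glossed over is that detaching the event $E_{j-1}$ from the compound event about $u_j^*$ requires combining Lemma~\ref{l:indep-main}(ii)--(iii) via the contraction rule (Lemma~\ref{l:contraction}), exactly as the paper does.
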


\begin{proof}
  Let $u = u^i_j$ and $e = e^i_j$ in the notation introduced above. We
  prove all the claims simultaneously by double induction on $t$ and
  $j$: we show that if the claims hold for every vertex $u^{i'}_{j'}$
  and edge $e^{i'}_{j'}$ whose level is $t'$, such that $(t',j')$
  precedes $(t,j)$ in the lexicographic order, then they also hold for
  $u$ and $e$. The base case $t=1$ and $j=-1$ (virtual vertex or edge)
  follows directly from the construction.

  Consider assertion (i). By the rule for the inclusion of an edge in
  $T$, $e\in T$ if and only if neither $e^-\in T$ nor $u^-\in T$. By
  the induction hypothesis, the latter two events occur with
  probability $p(t)$ and $q(t)$, respectively. (All the probabilities
  in this proof are relative to the condition $\lambda(u) = t$.) Since
  the events are disjoint, the probability that none occurs is
  $1-p(t)-q(t) = p(t)$ as claimed.

  The proof of (ii) is similar: given the assumption that
  $\lambda(u^*) > t$, the condition (\ref{eq:incl-vertex2}) for the
  inclusion of $u$ (on page~\pageref{eq:incl-vertex2}) is vacuously
  true. Thus $u$ is included if and only if condition
  (\ref{eq:incl-vertex1}) holds, which happens with probability
  $1-p(t)-q(t) = p(t)$.

  Part (iii) is clear since $u$ is never added to $T$ if $\lambda(u) =
  \lambda(u^*)$. 

  It remains to prove (iv). Here we know that (\ref{eq:incl-vertex1})
  again holds with probability $p(t)$. To assess the probability of
  (\ref{eq:incl-vertex2}), let us compute
  \begin{align*}
    &\cprob{(u_j^*\notin T \wedge \lambda(u_j^*) < \lambda(u_j))
      \vee (\lambda(u_j^*) > \lambda(u_j))}{\lambda(u) = t}\\
    &= \sum_{i=1}^{t-1}\cprob{u_j^*\notin T \wedge \lambda(u_j^*) =
      i}{\lambda(u) = t} + \sum_{i = t+1}^k\cprob{\lambda(u_j^*) =
      i}{\lambda(u) = t}\\
    &= \sum_{i=1}^{t-1} \frac{1-p(i)}k + \sum_{i=t+1}^k\frac1k,
  \end{align*}
  where the last equality follows from the induction hypothesis. Since
  $q(t)$ is just the product of the result with $p(t)$, we need to
  show that (\ref{eq:incl-vertex1}) and (\ref{eq:incl-vertex2}) are
  conditionally independent given the condition $\lambda(u) = t$. To
  rephrase this task, let us write
  \begin{align*}
    X_1 &\equiv u^-\in T, &
    Y_1 &\equiv u^*\notin T \text{ and } \lambda(u^*) < t,\\
    X_2 &\equiv e^-\in T, &
    Y_2 &\equiv \lambda(u^*) > t,
  \end{align*}
  so that (\ref{eq:incl-vertex1}) is equivalent to $\overline{X_1\vee
    X_2}$ and (\ref{eq:incl-vertex2}) is equivalent to $Y_1\vee Y_2$
  (assuming $\lambda(u) = t$). By basic facts of probability, the
  above conditional independence will be established if we can show
  that each $X_i$ is conditionally independent of each $Y_j$
  ($i,j\in\Setx{1,2}$) given that $\lambda(u) = t$.

  For $j=2$, this follows directly from Lemma~\ref{l:indep-main} (i)
  by expressing $Y_2$ as the union of disjoint events
  $\Set{\lambda(u^*)=i}{i=t+1,\dots,k}$. For $j=1$, we apply
  Lemma~\ref{l:contraction}, substituting $X_i$ for $A$ (where
  $i=1,2$), $u^*\notin T$ for $B$, $\lambda(u^*) < t$ for $C$ and
  $\lambda(u) = t$ for $D$. The hypothesis of the lemma is satisfied
  by Lemma~\ref{l:indep-main}~(ii) and (iii), so it follows that $X_i$
  and $Y_1$ are conditionally independent given $\lambda(u) = t$ as
  required. The proof is complete.
\end{proof}

Lemma~\ref{l:v-e-prob} enables us to compute the probability that any
vertex of $G$ or edge of $F$ is in $T$. Note that the probabilities do not
depend on $G$:
\begin{observation}
  \label{obs:v-e-average}
  Let $v\in V(G)$ and $e\in E(F)$. Then
  \begin{align*}
    \prob{e\in T} &= p^* := \sum_{i=1}^k \frac{p(i)}k,\\
    \prob{v\in T} &= q^* := \sum_{i=1}^k \frac{q(i)}k.
  \end{align*}
\end{observation}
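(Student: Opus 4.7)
The plan is to derive both identities as immediate corollaries of Lemma~\ref{l:v-e-prob} by applying the law of total probability, conditioning on the level of the element in question. The only new ingredient needed is that for every $x \in V(F) \cup E(F)$, the level $\lambda(x)$ is uniformly distributed over $\Setx{1,\dots,k}$.

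First I would establish this uniformity. By construction, $\lambda$ is first defined on the boundary set $B$ as a uniformly random function $B \to \Setx{1,\dots,k}$, and then extended to $V(F) \cup E(F)$ by setting $\lambda(x)$ to be the level of a \emph{specific} boundary edge (the closest one preceding $x$ with respect to the orientation of $F$, which is well-defined because each cycle of $F$ contains at least two edges of $B$). Since this distinguished boundary edge receives its level uniformly and independently, $\lambda(x)$ is uniform on $\Setx{1,\dots,k}$ for every fixed $x$.

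Then the computation is a one-line application of the law of total probability. For $e \in E(F)$, using Lemma~\ref{l:v-e-prob}~(i),
\begin{equation*}
  \prob{e\in T} = \sum_{i=1}^k \cprob{e\in T}{\lambda(e)=i}\,\prob{\lambda(e)=i} = \sum_{i=1}^k \frac{p(i)}{k} = p^*.
\end{equation*}
Similarly, for $v \in V(G)$, using Lemma~\ref{l:v-e-prob}~(iv),
\begin{equation*}
  \prob{v\in T} = \sum_{i=1}^k \cprob{v\in T}{\lambda(v)=i}\,\prob{\lambda(v)=i} = \sum_{i=1}^k \frac{q(i)}{k} = q^*.
\end{equation*}

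There is no real obstacle here; the observation is essentially a bookkeeping corollary. The only small point worth noting explicitly is that the resulting probabilities $p^*$ and $q^*$ depend only on $k$ (through the recurrence \eqref{eq:q}), and in particular do not depend on the ambient graph $G$, the chosen 2-factor $F$, or the chosen $(F,\ell)$-sparse set $B$. This independence is precisely what the subsequent averaging argument (over complements of perfect matchings from $\WW$) will exploit.
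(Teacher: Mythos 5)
Your proposal is correct and matches the paper's intended argument: the paper's one-line proof ("The assertions follow from Lemma~\ref{l:v-e-prob}(i) and (iv)") is exactly the law-of-total-probability computation you spell out, with the implicit use of the uniformity of $\lambda(x)$ that you make explicit.
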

\begin{proof}
  The assertions follow from Lemma~\ref{l:v-e-prob}(i) and (iv).
\end{proof}

Phase 1 of the construction is now complete. Let us summarize: we have
constructed a set $T$ whose restriction to any component of $F-B$ is
total independent. The inclusion of a vertex of $G$ in $T$ is the same
for all vertices (even when conditioned on the level of the
vertex). The same holds for the inclusion of an edge of $F$ in $T$.

\begin{phase}
  We modify $T$ to a full total independent set $\tilde T$.
\end{phase}

Let us examine the possible reasons why $T$ is not full and total
independent in detail. Consider a boundary edge $e_0=e^i$ and its end
$u_0=u_0^i$ in $P^i$, and suppose that $e^i$ is also incident with a
path $P^j$. Let $u'$ denote the last vertex of $P^j$ (thus, $u' =
u_0^-$) and write $u'' = (u')^-$ and $e' = u''u'$. Recall that if $u$
is a vertex of $G$, then $u^*$ denotes its mate.

A \emph{conflict} at $e^i$ is any of the situations listed in the
middle column of Table~\ref{tab:conflicts}; the right hand column
shows how to modify $T$ in order to resolve the conflict. Note that
all the cases are mutually exclusive and that the resolution rules are
deterministic. The conflict types are shown in
Figure~\ref{fig:conflicts}.

\begin{table}
  \begin{center}
    \renewcommand{\arraystretch}{1.2}
    \begin{tabular}{c|p{.45\textwidth}|l}
      type & situation & action on $T$\\\hline
      I & $u'\in T$ and $u_0\in T$ & 
      replace $u'$ and $u_0$ by $e_0$\\ 
      II & $u'\in T$ and $e_0\in T$ & 
      remove $u'$\\
      IIIa & $\Setx{e',e_0,u_1}\subseteq T$ &
      replace $e_0$ and $u_1$ by $e_1$\\
      IIIb & $\Setx{e',e_0,u_0^*}\subseteq T$ and $u_1\notin T$ &
      replace $e_0$ and $u_0^*$ by $u_0u_0^*$\\
      IIIc & $\Setx{e',e_0}\subseteq T$ and $u_1,u_0^*\notin T$ &
      replace $e_0$ by $u_0$\\
      IVa & $u'$ is not covered by $T$, $u_0\in T$ &
      replace $u_0$ by $e_0$\\
      IVb & $u'$ is not covered by $T$, $u_0\notin T$, $u''\in T$ &
      replace $u''$ by $e'$\\
      IVc & $u'$ is not covered by $T$, $u_0,u''\notin T$,
      $(u')^*\in T$ &
      replace $(u')^*$ by $u'(u')^*$
    \end{tabular}
  \end{center}
  \caption{The types of conflicts.}
  \label{tab:conflicts}
\end{table}

\setlength{\tabcolsep}{.28cm}
\newlength{\captionspace}
\setlength{\captionspace}{.8cm}
\newcommand{\virtualleft}{%
  \path (-2,0) %
    node[vertex,white] {}%
    node[lbl,white] {$u''$};%
  }

\begin{sloppypar}
\renewcommand{\arraystretch}{3.5}
\renewcommand{\tabularxcolumn}[1]{>{\arraybackslash}m{#1}}
\newcommand{\cnum}[1]{\begin{minipage}{1cm}\raggedleft#1\vspace{7mm}\end{minipage}}
\begin{figure}
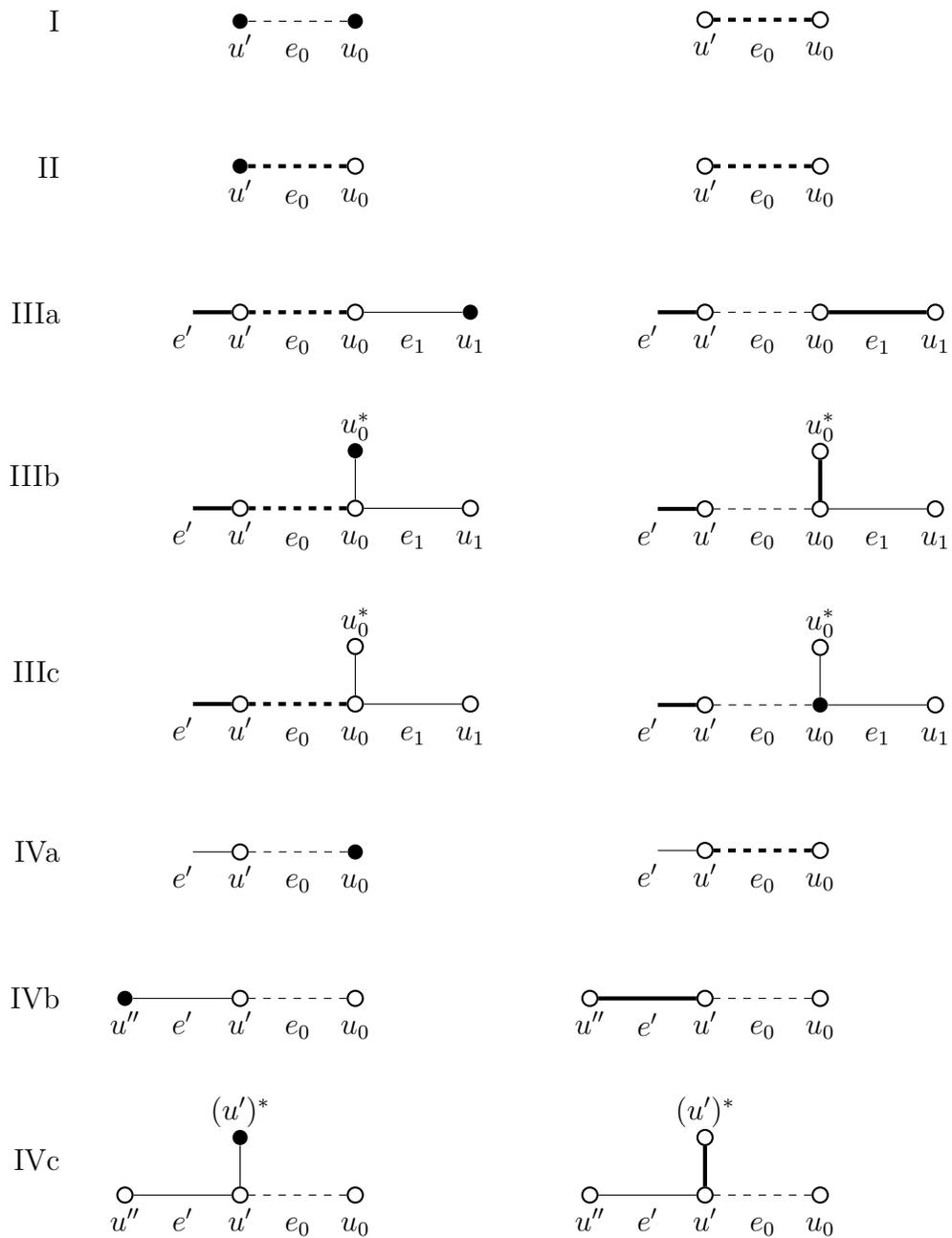

  \begin{center}
    \begin{tabularx}{\textwidth}{>{\raggedleft}p{1cm}XX}
      \cnum{I} & 
      \begin{conflict}
        \virtualleft
        \path (0,0) node[vertex] (a) {} node[lbl] {$u'$};
        \path (2,0) 
          node[lbl] {$u_0$} 
          node[vertex] (b) {} 
          edge[thin,dashed] node[lbl] {$e_0$} (a);
      \end{conflict} & 
      \begin{conflict}
        \virtualleft
        \path (0,0) node[whitev] (a) {} node[lbl] {$u'$};
        \path (2,0) 
          node[lbl] {$u_0$} 
          node[whitev] (b) {} 
          edge[ultra thick,dashed] node[lbl] {$e_0$} (a);
      \end{conflict} \\
    \cnum{II} & 
    \begin{conflict}
      \virtualleft
      \path (0,0) node[vertex] (a) {} node[lbl] {$u'$};
      \path (2,0) node[lbl] {$u_0$} node[whitev] (b) {} 
          edge[ultra thick,dashed] node[lbl] {$e_0$} (a);
    \end{conflict} &
    \begin{conflict}
      \virtualleft
      \path (0,0) node[whitev] (a) {} node[lbl] {$u'$};
      \path (2,0) node[lbl] {$u_0$} node[whitev] (b) {} 
          edge[ultra thick,dashed] node[lbl] {$e_0$} (a);
    \end{conflict} \\
    \cnum{IIIa} & 
    \begin{conflict}
      \virtualleft
      \path (-1,0) node (c) {}
        node[lbl] {$e'$};      
      \path (0,0) 
        node[whitev] (a) {} 
        edge[ultra thick] (c)
        (a) node[lbl] {$u'$};
      \path (2,0) node[lbl] {$u_0$} 
        node[whitev] (b) {} 
        edge[ultra thick,dashed] node[lbl] {$e_0$} (a);
      \path (4,0) 
        node[lbl] {$u_1$}
        node[vertex] (d) {} 
        edge[thin] node[lbl] {$e_1$}
        (b);
    \end{conflict} &
    \begin{conflict}
      \virtualleft
      \path (-1,0) node (c) {}
        node[lbl] {$e'$};      
      \path (0,0) 
        node[whitev] (a) {} 
        edge[ultra thick] (c)
        (a) node[lbl] {$u'$};
      \path (2,0) node[lbl] {$u_0$} 
        node[whitev] (b) {} 
        edge[thin,dashed] node[lbl] {$e_0$} (a);
      \path (4,0) 
        node[lbl] {$u_1$}
        node[whitev] (d) {} 
        edge[ultra thick] node[lbl] {$e_1$}
        (b);
    \end{conflict}\\
    \cnum{IIIb} & 
    \begin{conflict}
      \virtualleft
      \path (-1,0) node (c) {}
        node[lbl] {$e'$};            
      \path (0,0) node[whitev] (a) {} 
        edge[ultra thick] (c)
        (a) node[lbl] {$u'$};
      \path (2,0) node[lbl] {$u_0$}
        node[whitev] (u0) {} 
        edge[ultra thick,dashed] node[lbl] {$e_0$} (a);
      \path (2,1) 
        node[above] {$u_0^*$}        
        node[vertex] (u0s) {} 
        edge[thin] (u0);
      \path (4,0)
        node[lbl] {$u_1$}
        node[whitev] (d) {} 
        edge[thin] node[lbl] {$e_1$}
        (u0);
    \end{conflict} &
    \begin{conflict}
      \virtualleft
      \path (-1,0) node (c) {}
        node[lbl] {$e'$};            
      \path (0,0) node[whitev] (a) {} 
        edge[ultra thick] (c)
        (a) node[lbl] {$u'$};
      \path (2,0) node[lbl] {$u_0$}
        node[whitev] (u0) {} 
        edge[thin,dashed] node[lbl] {$e_0$} (a);
      \path (2,1) 
        node[above] {$u_0^*$}        
        node[whitev] (u0s) {} 
        edge[ultra thick] (u0);
      \path (4,0)
        node[lbl] {$u_1$}
        node[whitev] (d) {} 
        edge[thin] node[lbl] {$e_1$}
        (u0);
    \end{conflict} \\
    \cnum{IIIc} & 
    \begin{conflict}
      \virtualleft
      \path (-1,0) node (c) {}
        node[lbl] {$e'$};      
      \path (0,0) node[whitev] (a) {} 
        edge[ultra thick] (c)
        (a) node[lbl] {$u'$};
      \path (2,0) node[lbl] {$u_0$}
        node[whitev] (u0) {} 
        edge[ultra thick,dashed] node[lbl] {$e_0$} (a);
      \path (2,1) 
        node[above] {$u_0^*$}        
        node[whitev] (u0s) {} 
        edge[thin] (u0);
      \path (4,0) 
        node[lbl] {$u_1$}
        node[whitev] (d) {} 
        edge[thin] node[lbl] {$e_1$}
        (u0);
    \end{conflict} &
    \begin{conflict}
      \virtualleft
      \path (-1,0) node (c) {}
        node[lbl] {$e'$};      
      \path (0,0) node[whitev] (a) {} 
        edge[ultra thick] (c)
        (a) node[lbl] {$u'$};
      \path (2,0) node[lbl] {$u_0$}
        node[vertex] (u0) {} 
        edge[thin,dashed] node[lbl] {$e_0$} (a);
      \path (2,1) 
        node[above] {$u_0^*$}        
        node[whitev] (u0s) {} 
        edge[thin] (u0);
      \path (4,0) 
        node[lbl] {$u_1$}
        node[whitev] (d) {} 
        edge[thin] node[lbl] {$e_1$}
        (u0);
    \end{conflict} \\
    \cnum{IVa} & 
    \begin{conflict}
      \virtualleft
      \path (-1,0) node (c) {}
        node[lbl] {$e'$};            
      \path (0,0) node[whitev] (um1) {} edge[thin] (c) 
        (um1) node[lbl] {$u'$};
      \path (2,0) node[lbl] {$u_0$}
          node[vertex] (u0) {} 
          edge[thin,dashed] node[lbl] {$e_0$} (a);
    \end{conflict} &
    \begin{conflict}
      \virtualleft
      \path (-1,0) node (c) {}
        node[lbl] {$e'$};            
      \path (0,0) node[whitev] (um1) {} edge[thin] (c) 
        (um1) node[lbl] {$u'$};
      \path (2,0) node[lbl] {$u_0$}
          node[whitev] (u0) {} 
          edge[ultra thick,dashed] node[lbl] {$e_0$} (a);
    \end{conflict}\\
    \cnum{IVb} & 
    \begin{conflict}
      \path (-2,0) 
        node[vertex] (um2) {}
        node[lbl] {$u''$}; 
      \path (0,0) 
        node[whitev] (um1) {} 
        edge[thin] node[lbl] {$e'$} 
          (um2)
        (um1) node[lbl] {$u'$};
      \path (2,0) 
         node[lbl] {$u_0$}
         node[whitev] (u0) {} 
         edge[thin,dashed] node[lbl] {$e_0$} (a);
    \end{conflict} &
    \begin{conflict}
      \path (-2,0) 
        node[whitev] (um2) {}
        node[lbl] {$u''$}; 
      \path (0,0) 
        node[whitev] (um1) {} 
        edge[ultra thick] node[lbl] {$e'$} 
          (um2)
        (um1) node[lbl] {$u'$};
      \path (2,0) 
         node[lbl] {$u_0$}
         node[whitev] (u0) {} 
         edge[thin,dashed] node[lbl] {$e_0$} (a);
    \end{conflict} \\
    \cnum{IVc} & 
    \begin{conflict}
      \path (-2,0) 
        node[whitev] (um2) {}
        node[lbl] {$u''$}; 
      \path (0,0) 
        node[whitev] (um1) {} 
        edge[thin] 
          node[lbl] {$e'$} 
          (um2)
        (um1) node[lbl] {$u'$};
      \path (2,0) 
        node[lbl] {$u_0$}
        node[whitev] (u0) {} 
        edge[thin,dashed] node[lbl] {$e_0$} (a);
      \path (0,1)
        node[above] {$(u')^*$}
        node[vertex] (um1s) {} 
        edge[thin] (um1);
    \end{conflict} &
    \begin{conflict}
      \path (-2,0) 
        node[whitev] (um2) {}
        node[lbl] {$u''$}; 
      \path (0,0) 
        node[whitev] (um1) {} 
        edge[thin] 
          node[lbl] {$e'$} 
          (um2)
        (um1) node[lbl] {$u'$};
      \path (2,0) 
        node[lbl] {$u_0$}
        node[whitev] (u0) {} 
        edge[thin,dashed] node[lbl] {$e_0$} (a);
      \path (0,1)
        node[above] {$(u')^*$}
        node[whitev] (um1s) {} 
        edge[ultra thick] (um1);
    \end{conflict}
  \end{tabularx}
  \end{center}
  \caption{Possible conflict types. The figures on the left are the
    conflict situations, those on the right show the resolution of the
    conflict. The boundary edge is shown dashed; thick edges and black
    vertices are those included in $T$.}
  \label{fig:conflicts}
\end{figure}
\end{sloppypar}

The resolution of a conflict at $e^i$ only affects vertices and edges
in $N_2(e^i)$. Since $B$ is 4-distant, each vertex and edge is in at most
one set $N_2(e^i)$. Thus, the order in which the conflicts are
resolved is irrelevant. It is easy to see that after the resolution of
all the conflicts, the resulting set $\tilde T$ is total independent
and full.

We need to show that the conflicts occur in a uniform manner
throughout $G$, i.e., that if $e,f\in B$, then the probability of a
conflict of any given type is the same at $e$ and $f$. As an example,
we consider the conflict type IIIb and sketch how to prove this claim.

Observe that, under the assumption $\lambda(e_0)=t$, the conflict of
type IIIb occurs if and only if $X_1 \wedge X_2 \wedge X_3 \wedge X_4$
occurs, where:
\begin{align*}
  X_1 &\ \equiv\ e'\in T,\\
  X_2 &\ \equiv\ e_0\in T,\\
  X_3 &\ \equiv\ (u_0^*)^-\notin T \ \wedge\ \bigl((u_0^*)^-u_0^*\notin T \wedge
    \lambda(u_0^*) \neq t\bigr),\\
  X_4 &\ \equiv\ (\lambda(u_1^*) < t \wedge u_1^*\in T) \ \vee\ \lambda(u_1^*)=t.
\end{align*}
The conditional probability of each of these events (with respect to
$\lambda(e_0)=t$) is not hard to compute using the results proved
earlier in this section. Reasoning similarly as in the proof of
Lemma~\ref{l:indep-main}, one can show that each set of events
$\Setx{X_1,X_2,Y_3,Y_4}$ is conditionally mutually independent given
$\lambda(e_0)=t$, where $Y_3$ ($Y_4$) ranges over the `summands' in
the disjunction $\overline{X_3}$ ($X_4$, respectively). From this, it
is a simple exercise in the use of Lemma~\ref{l:contraction} to
conclude that the events $X_i$ are conditionally mutually independent
given that $\lambda(e_0)=t$. In particular, the probability
$P_{\mathrm{IIIb}}$ of the conflict of type IIIb is
\begin{align*}
  P_{\mathrm{IIIb}} &= \sum_{t=1}^k \cprob{X_1}{\lambda(e_0)=t} \cdot 
  \cprob{X_2}{\lambda(e_0)=t} \cdot \\
  &\qquad\cprob{X_3}{\lambda(e_0)=t} \cdot 
  \cprob{X_4}{\lambda(e_0)=t}\\
  &= (p^*)\cdot p(t) \cdot \left( p^*-\frac{p(t)}k \right)
  \cdot \frac1k\left(1+\sum_{i=1}^{t-1} p(i)\right),
\end{align*}
where the last equality follows from Lemma~\ref{l:v-e-prob} and
Observation~\ref{obs:v-e-average} (we point out the use of
Lemma~\ref{l:v-e-prob}(ii) to compute the conditional probability of
$X_4$). Note that the resulting value of $P_{\mathrm{IIIb}}$ does not
depend on $e_0$. For all of the other conflict types, a similar
computation applies.

The way we resolve conflicts of type IIIb decreases the probability
that $e_0 \in \tilde T$ by $P_{\mathrm{IIIb}}$ when comparing to
$\prob{e_0\in T}$. The final probability $\prob{e_0\in \tilde T}$ can
be determined by considering all the conflict types whose resolution
involves $e_0$, namely types I, IIIa, IIIb, IIIc and IVa. It is
important that $\prob{x\in \tilde T}$ only depends on the position of
$x\in V(T(G))$ relative to $B$. To formalize this notion, let us
define the \emph{$(F,B)$-type} (or just \emph{type}) of $x$ as
follows.

\begin{figure}
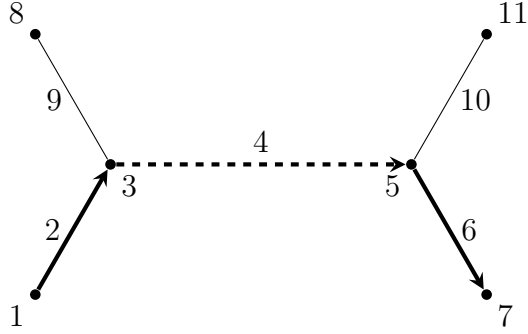

  \begin{center}
    \begin{tikzgraph}[scale=2]
      \path (0,0) node[vertex] (a) {} node[below right] {3};
      \path ($(a) + (120:1)$) node[vertex] (a1) {} node[above left]{8};
      \path ($(a) + (240:1)$) node[vertex] (a2) {} node[below left]{1};
      \path (2,0) node[vertex] (b) {} node[below left] {5};
      \path ($(b) + (60:1)$) node[vertex] (b1) {} node[above right]{11};
      \path ($(b) + (-60:1)$) node[vertex] (b2) {} node[below right]{7};
      \path (a) edge[thin] node[left] {9} (a1) 
        edge[ultra thick=4pt,<-] node [left] {2} (a2);
      \path (a) edge[ultra thick=4pt,dashed,->] node[above] {4} (b);
      \path (b) edge[thin] node[right]{10} (b1) 
        edge[ultra thick=4pt,->] node[right]{6} (b2);
    \end{tikzgraph}
  \end{center}
  \caption{The definition of an $(F,B)$-type.}
  \label{fig:type}
\end{figure}

Assume that $x\in N_2(e)$, where $e\in B$. Consider the graph $H$ in
Figure~\ref{fig:type} and the unique isomorphism between $N_2(e)$
(viewed as a subgraph of $G$) and $H$, taking the edges of $F$ to the
bold edges in such a way that their orientations match. We define the
$(F,B)$-type of $x$ as the label associated to the image of $x$ in
$H$. Thus, the type is an integer from $\Setx{1,\dots,11}$. Note that
it is only defined for the vertices and edges of $N_2(B)$. 

\begin{observation}
  \label{obs:probs-main}
  For $x,y\in V(T(G))$, the following holds:
  \begin{enumerate}[\quad(i)]
  \item if $x,y\in V(G)-N_2(B)$, then 
    \begin{equation*}
      \prob{x\in\tilde T} = \prob{y\in\tilde T} = \prob{x\in T},
    \end{equation*}
  \item if $x,y\in E(F)-N_2(B)$, then 
    \begin{equation*}
      \prob{x\in\tilde T} = \prob{y\in\tilde T} = \prob{x\in T},
    \end{equation*}
  \item if $x,y\in N_2(B)$ and the $(F,B)$-type of $x$ and $y$ is the
    same, then
    \begin{equation*}
      \prob{x\in\tilde T} = \prob{y\in\tilde T}.
  \end{equation*}
  \end{enumerate}
\end{observation}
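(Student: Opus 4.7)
The plan is to handle (i) and (ii) by direct inspection and then reduce (iii) to the already-sketched case of type~IIIb by a systematic decomposition. For parts (i) and (ii), the resolution of a conflict at a boundary edge $e^i \in B$ modifies $T$ only inside $N_2(e^i)$, so every $x \notin N_2(B)$ satisfies $x \in \tilde T$ iff $x \in T$. By Observation~\ref{obs:v-e-average}, $\prob{x \in T}$ is the universal constant $q^*$ if $x$ is a vertex of $G$ and $p^*$ if $x$ is an edge of $F$, independently of $x$; parts (i) and (ii) follow at once.

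For (iii), I would first use that $B$ is $4$-distant, which makes the sets $N_2(e)$ ($e\in B$) pairwise disjoint. Hence each $x \in N_2(B)$ lies in a unique $N_2(e(x))$, and only conflicts at $e(x)$ can change the status of $x$. Since the nine conflict types are mutually exclusive at a single boundary edge, I propose the decomposition
\begin{equation*}
\prob{x \in \tilde T} \;=\; \prob{x \in T} \;-\; \sum_{C \in R^-(x)} \prob{C \text{ occurs at } e(x)} \;+\; \sum_{C \in R^+(x)} \prob{C \text{ occurs at } e(x)},
\end{equation*}
where $R^-(x)$ (respectively $R^+(x)$) collects the conflict types whose resolution removes $x$ from (respectively adds $x$ to) the set, both read off directly from Table~\ref{tab:conflicts}. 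An inspection of the table shows that whenever $C \in R^-(x)$ occurs we automatically have $x \in T$, and whenever $C \in R^+(x)$ occurs we automatically have $x \notin T$, so no extra conditioning is needed in the summands.

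It then remains to check that both the sets $R^\pm(x)$ and the conflict probabilities $\prob{C \text{ at } e(x)}$ depend only on the $(F,B)$-type of $x$. The former is immediate from the definition of the type, which encodes the role of $x$ in the local picture around $e(x)$. For the latter, I would mimic the paper's derivation for type~IIIb: condition on $\lambda(e(x)) = t$; express the occurrence of $C$ as a conjunction $X_1 \wedge \cdots \wedge X_s$ of events concerning membership in $T$ and levels of nearby vertices; establish conditional mutual independence of the $X_i$ given $\lambda(e(x)) = t$ by iterated applications of Lemma~\ref{l:indep-main} and Lemma~\ref{l:contraction}; compute each factor via Lemma~\ref{l:v-e-prob} and Observation~\ref{obs:v-e-average}; and sum over $t \in \{1,\dots,k\}$. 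The resulting expression involves only $k$ and the sequences $p(\cdot), q(\cdot)$, and hence coincides at every $x$ of the given type.

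The principal obstacle is bookkeeping: this conditional-independence analysis must be repeated for each of the nine conflict types at each of the eleven $(F,B)$-types, and in every application of Lemma~\ref{l:indep-main} one must check that the rightward and $\lambda$-ascending paths introduced fit within the permitted length. That is exactly what the girth bound $15k\ell$ was tailored for, so no new geometric input is needed beyond what the IIIb sketch already exhibits.
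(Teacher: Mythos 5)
Your argument is correct and follows the paper's own justification of this observation essentially verbatim: the locality of conflict resolution within the pairwise disjoint sets $N_2(e)$, $e\in B$, together with Observation~\ref{obs:v-e-average}, gives (i) and (ii), and (iii) reduces to showing each conflict probability depends only on the $(F,B)$-type, which is exactly the conditional-independence computation the paper carries out for type IIIb and declares analogous for the remaining types. (A trivial point: Table~\ref{tab:conflicts} lists eight conflict types, not nine, but this does not affect the argument.)
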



\section{Cubic bridgeless graphs}
\label{sec:cubic}

In this section, we prove Theorem~\ref{t:main} under the assumption
that $G$ is a cubic bridgeless graph, deferring the general case to
Section~\ref{sec:subcubic}. Recall our assumption that the girth of
$G$ is at least $15k\ell$, where $k$ and $\ell$ are appropriately
chosen constants to be determined in Lemma~\ref{l:weights2}. Let $F$
be a 2-factor in $G$ and let $B\subseteq E(F)$ be an $(F,\ell)$-sparse
set of edges.

At this point, we need to introduce the following concept and
result. A graph $H$ is \emph{strongly $r$-colourable} if for any
partition of $V(H)$ into $\lceil\size{V(H)}/r\rceil$ parts, each of
size at most $r$, $H$ admits a proper $r$-colouring with each colour
class intersecting each part of the partition in at most one
vertex. It is known~\cite{bib:Fel-transversals} that a strongly
$r$-colourable graph is also strongly $(r+1)$-colourable. It is
therefore natural to define the \emph{strong chromatic number} of $H$
as the smallest $r$ such that $H$ is strongly
$r$-colourable. Haxell~\cite{bib:Hax-strong} proved the following
upper bound on the strong chromatic number, improving an earlier
result of Alon~\cite{bib:Alo-strong} (see
also~\cite{bib:Hax-improved}).

\begin{theorem}\label{t:strong}
  The strong chromatic number of $H$ is at most $3\Delta(H)-1$.
\end{theorem}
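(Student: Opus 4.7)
The plan is to deduce this from Haxell's independent transversal theorem (the case of parts of size at least $2\Delta(H)$) by a maximum partial colouring and augmentation argument. First I would reduce to the uniform case: given any partition of $V(H)$ into parts of size at most $r := 3\Delta(H)-1$, pad each part with isolated dummy vertices until every part has size exactly $r$. Padding preserves $\Delta(H)$, and a strong $r$-colouring of the padded graph restricts to one of $H$. In this uniform setting, a strong $r$-colouring is equivalent to partitioning $V(H)$ into $r$ pairwise disjoint independent transversals $T_1,\dots,T_r$ with respect to the partition.

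The naive iterative approach — extract one independent transversal via Haxell's theorem and delete it — only produces $\Delta(H)$ transversals before the parts shrink below the $2\Delta(H)$ threshold, so a more global argument is required. I would instead consider a partial strong $r$-colouring $\phi$ of maximum size and derive a contradiction if any vertex remains uncoloured. Let $v_0$ be uncoloured, lying in part $V_{i_0}$. Among the $r$ colours, at most $\Delta(H)$ are used by neighbours of $v_0$, so at least $r-\Delta(H)=2\Delta(H)-1$ of them are blocked only because they are used by some cohabitant of $v_0$ in $V_{i_0}$. The idea is to recolour one such cohabitant $u$, swapping it to a colour not appearing on its neighbours, and to propagate further forced recolourings along an alternating tree structure, in the spirit of Hall-type augmentation proofs.

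The technical heart — and the main obstacle — is to package this search as an auxiliary independent transversal problem whose parts are large enough to invoke Haxell's theorem a second time. Each vertex on the augmentation frontier contributes a ``part'' consisting of its candidate replacement colours (those not forbidden by its neighbours), while the auxiliary conflict graph has maximum degree controlled by $\Delta(H)$. The arithmetic of the bound $r=3\Delta(H)-1$ is precisely what forces the auxiliary part sizes to meet the $2\Delta(H)$ threshold of Haxell's theorem: one spends $\Delta(H)$ colours on direct neighbour conflicts and a further $\Delta(H)-1$ on in-part conflicts along the augmentation, leaving exactly $2\Delta(H)$ usable colours at each frontier vertex. An independent transversal in this auxiliary structure then encodes a consistent simultaneous recolouring that extends $\phi$, contradicting maximality. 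Setting up the auxiliary partition correctly — in particular, handling the case where the augmentation tree ``cycles back'' onto a vertex already scheduled for recolouring, and ensuring the frontier does not exhaust the supply of fresh colours — is where the argument is most delicate, and is the point where the factor of $3$ (rather than $2$) becomes essential.
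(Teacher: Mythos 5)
The paper does not prove this statement at all: Theorem~\ref{t:strong} is quoted as Haxell's theorem, imported from the literature (improving Alon's earlier bound), so there is no internal proof to compare yours against. Your proposal is therefore an attempt to reconstruct Haxell's argument, and judged on those terms it has a genuine gap: everything you actually carry out (padding the parts to uniform size $r=3\Delta(H)-1$, observing that greedily extracting independent transversals stalls after $\Delta(H)$ rounds, taking a maximum partial strong colouring and trying to augment at an uncoloured vertex) is the easy periphery, while the entire content of the theorem sits in the step you explicitly defer --- namely, exhibiting an auxiliary independent-transversal instance that encodes a consistent simultaneous recolouring and meets the $2\Delta$ threshold. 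Declaring that this is ``where the argument is most delicate'' does not discharge it; without that lemma the maximality of $\phi$ yields no contradiction.

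Moreover, the arithmetic you offer in support of the threshold does not balance: from $r=3\Delta(H)-1$ colours, removing $\Delta(H)$ for neighbour conflicts and a further $\Delta(H)-1$ for in-part conflicts leaves $\Delta(H)$ candidate colours per frontier vertex, not the $2\Delta(H)$ you claim, so the plan as stated would not let you invoke Haxell's independent-transversal theorem a second time. This is symptomatic of a real obstruction: Haxell's published proof of the $3\Delta-1$ bound is not a black-box second application of the ``parts of size $2\Delta$'' transversal theorem, but relies on a strengthened transversal lemma (controlling which vertices the transversal meets, proved by her tree-of-alternating-paths technique) together with a carefully organised induction. If you want a complete proof you must either reproduce that machinery or find a genuinely new route; as written, the proposal is a correct description of the strategy with the decisive lemma missing.
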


We will use Theorem~\ref{t:strong} to show that under certain
conditions, $E(F)$ can be decomposed into $(F,\ell)$-sparse sets. In
the following result, all that we need in this section is the special
case $E(Q)=\emptyset$; the general statement will be used in Section
\ref{sec:subcubic}.

\begin{lemma}\label{l:decompose}
  Let $F$ be a 2-factor of $G$ and let $Q$ be a graph with vertex set
  $E(F)$. If $\ell \geq 83 + 3\Delta(Q)$, then the set $E(F)$ can be
  decomposed into $3\ell$ sets, each of which is $(F,\ell)$-sparse and
  none of which contains a pair of edges that forms an edge of $Q$.
\end{lemma}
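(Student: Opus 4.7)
The plan is to apply Theorem~\ref{t:strong} to a suitable auxiliary graph $H$ on vertex set $V(H)=E(F)$, together with a partition of $E(F)$ into arcs along the cycles of $F$. I would put $ef\in E(H)$ whenever $e,f\in E(F)$ are at $T(G)$-distance at most~$3$ (which forces any proper colouring of $H$ to yield $4$-distant colour classes), and also whenever $\{e,f\}\in E(Q)$. Because $G$ is cubic, the total graph $T(G)$ has maximum degree~$6$ and each vertex of~$G$ is incident with exactly two edges of $F$; a level-by-level BFS count in $T(G)$ shows that the number of $F$-edges within $T(G)$-distance~$3$ of any given $F$-edge is at most~$28$. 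Hence $\Delta(H)\leq 28+\Delta(Q)$, and Theorem~\ref{t:strong} gives $\chi_s(H)\leq 3\Delta(H)-1\leq 83+3\Delta(Q)\leq 3\ell$ by the hypothesis on~$\ell$.

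For the partition, I would subdivide each cycle $C$ of $F$ into consecutive arcs of length in a narrow window around~$3\ell$; this is straightforward because $|E(C)|\geq 15k\ell\gg 3\ell$, so all parts have size at most~$3\ell$. An application of Theorem~\ref{t:strong} with $r=3\ell$ then supplies a strong $3\ell$-colouring of $H$ that uses each colour at most once per arc, and I would declare the colour classes to be the sets $B_1,\dots,B_{3\ell}$. From the very definition of $H$, each $B_i$ is $4$-distant and contains no edge of~$Q$; moreover, two consecutive $B_i$-edges on a cycle of $F$ lie in adjacent arcs, so the path of $F-B_i$ between them has length at most $2\cdot 3\ell-2<7\ell$, giving the upper bound in the length condition.

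The main obstacle is the matching \emph{lower bound}~$\ell$ on the components of $F-B_i$. Theorem~\ref{t:strong} does not control where inside an arc a given colour is placed, so two neighbouring arcs could in principle produce an arbitrarily short gap between same-colour edges. To handle this I would enlarge $H$ by adding the pairs of $F$-edges in consecutive arcs on a common cycle of $F$ whose shared colouring would produce a gap shorter than~$\ell$; a per-position accounting shows that this contributes at most a controlled number of extra edges at each vertex. The delicate point is that the hypothesis $\ell\geq 83+3\Delta(Q)$ is exactly the slack required for Haxell's bound $3\Delta(H)-1$ to still fit inside the~$3\ell$ colours after this enlargement, and the technical heart of the proof is the simultaneous balancing of the arc lengths against the combined degree contributions from the $T(G)$-distance constraints, the edges of~$Q$, and the new cross-arc constraints.
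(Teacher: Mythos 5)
Your setup (the auxiliary graph $H$ encoding $T(G)$-distance at most $3$ and the edges of $Q$, the degree bound $\Delta(H)\leq 28+\Delta(Q)$, and the use of Theorem~\ref{t:strong} against a partition of $E(F)$ into arcs) matches the paper. But your fix for the lower bound on the components of $F-B_i$ does not work, and this is precisely where the real content of the lemma lies. If you add to $H$ an edge between every pair of $F$-edges in neighbouring arcs whose common colour would leave a gap shorter than $\ell$, then an $F$-edge $e$ sitting next to an arc boundary acquires roughly $2\ell$ such new neighbours (about $\ell$ on each side), not a ``controlled number'': the within-arc pairs are the only ones excused by the strong colouring, and near a boundary almost all of the $\sim 2\ell$ edges at cycle-distance less than $\ell$ from $e$ lie in adjacent arcs. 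Hence $\Delta(H)$ becomes $\Theta(\ell)$ and Haxell's bound $3\Delta(H)-1$ is of order $6\ell$, which can never fit inside $3\ell$ colours; no hypothesis of the form $\ell\geq 83+3\Delta(Q)$ can rescue this, since the deficit grows linearly with $\ell$ itself. (A secondary issue: with arcs of length $3\ell$, at least one arc per cycle is short, a colour may be absent from it, and then your upper bound becomes about $9\ell$ rather than $6\ell-2$, exceeding $7\ell$.)

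The paper avoids this entirely by a two-stage decomposition. It cuts each cycle into arcs of length exactly $\ell$ (plus one leftover arc) and applies Theorem~\ref{t:strong} with only $\ell$ colours, so that each colour class $B_r$ contains \emph{exactly} one edge from every full arc. It then splits each $B_r$ into three sets $B_{r,0},B_{r,1},B_{r,2}$ by assigning to the arcs of each cycle a cyclic word over $\{0,1,2\}$ in which consecutive occurrences of the same symbol are separated by one, two or three other symbols, and letting $B_{r,t}$ keep only the edges of $B_r$ lying in arcs labelled $t$. Two consecutive edges of $B_{r,t}$ on a cycle are then separated by at least one whole unselected arc (giving the lower bound $\ell$) and by at most a bounded number of arcs (giving the upper bound $7\ell$). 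This second stage --- trading $\ell$ colours for $3\ell$ sets via the arc-selection pattern --- is the missing idea in your proposal.
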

\begin{proof}
  We first use Theorem~\ref{t:strong} to partition $E(F)$ into
  4-distant sets containing no pairs of edges which form an edge of
  $Q$. Consider an auxiliary graph $H$ with vertex set $E(F)$ and an
  edge $ef$ for each pair $e,f\in E(F)$ such that either the distance
  between $e$ and $f$ in $G$ is at most 3, or $ef\in E(Q)$. It is easy
  to see that the maximum degree of $H$ is at most $28+\Delta(Q)$.

  Let the cycles of $F$ be $C_1,\dots,C_n$. For $1\leq i \leq n$, set
  $m(i) = \left\lceil\size{C_i}/\ell\right\rceil$. Split $C_i$ into
  edge-disjoint paths $P_{i,1},\dots,P_{i,m(i)}$ such that each
  $P_{i,j}$ with $j\geq 2$ has length $\ell$, while $P_{i,1}$ has
  length at least one. Let $\PP$ be a partition of $E(F)$ such that
  the edge set of each path $P_{i,j}$, where $1\leq i\leq s$ and
  $j\geq 2$, forms a class of $\PP$, and moreover all but at most one
  class of $\PP$ are of size $\ell$.

  Since $\ell\geq 83+3\Delta(Q)\geq 3\Delta(H)-1$,
  Theorem~\ref{t:strong} (applied to $\PP$) implies that there is a
  colouring (say, $c$) of the edges of $F$ by $\ell$ colours such that
  each colour class $B_1,\dots,B_{\ell}$ intersects each set in $\PP$
  in at most one edge. It follows that each $B_r$ contains exactly one
  edge from each $P\in\PP$ with $\size P=\ell$. Furthermore, by the
  construction of $H$, each $B_r$ is 4-distant and no $B_r$ contains
  edges $e,f$ with $ef\in E(Q)$.

  We now construct the desired partition of $E(F)$ into
  $(F,\ell)$-sparse sets $B_{r,t}$, where $r\in\Setx{1,\dots,\ell}$
  and $t\in\Setx{0,1,2}$. Each $B_{r,t}$ will be a subset of $B_r$. By
  the definition of an $(F,\ell)$-sparse set, all we need to ensure is
  that each component of $F-B_{r,t}$ is a path of length between
  $\ell$ and $7\ell$.

  For $i=1,\dots,n$, we construct a sequence $s_{i,1}, \dots,
  s_{i,m(i)}$ of symbols $0,1,2$ starting with $01$, ending with $2$,
  and such that every two consecutive occurences of the same symbol
  are separated by one, two or three other symbols (when considering
  the first and last symbol as adjacent). Since the girth of $G$ is at
  least $15k\ell > 6\ell$, it suffices to construct the sequence for
  each length starting with 6. We start with one of the following
  sequences depending on the residue class mod 3 of $m(i)$:
  \begin{align*}
    012012 & \text{\qquad for $m(i) \equiv 0 \pmod 3$,}\\
    0102012 & \text{\qquad for $m(i) \equiv 1 \pmod 3$,}\\
    01021012 & \text{\qquad for $m(i) \equiv 2 \pmod 3$}
  \end{align*}
  and insert a suitable number of blocks of 201 before the last symbol
  2 to make the length equal to $m(i)$.
 
  The set $B_{r,t}$ ($t\in\Setx{0,1,2}$) is defined as the
  intersection of $B_r$ with the edge sets of all paths $P_{i,j}$ such
  that $s_{i,j} = t$, where $i=1,\dots,n$ and $j=1,\dots,m(i)$. For
  $i=1,\dots,n$, each symbol of the sequence $s_{i,1}\dots
  s_{i,m(i)}$, except possibly for $s_{i,1}$, represents $\ell$
  consecutive edges. Furthermore, any two neighbouring symbols in this
  sequence as well as the second and last symbol are different. It
  follows that the distance on $C_i$ between any two edges in $B_{r,t}
  \cap E(C_i)$ is at least $\ell$.

  The upper bound follows from the fact that neighbouring occurences
  of any symbol $t\in\Setx{0,1,2}$ are separated by at most three
  other symbols. For $t=0$, this can be improved: neighboring
  occurences of 0 are separated by at most two symbols. At the same
  time, all but at most one symbol in the sequence correspond to paths
  containing edges of all $\ell$ colours. An easy case analysis
  implies that the components of $C_i-B_{r,t}$ ($t\in\Setx{0,1,2}$)
  are paths of length at most $7\ell$. Thus, the sets $B_{r,t}$ are
  indeed $(F,\ell)$-sparse.
\end{proof}

Recall the values $p^*$ and $q^*$, defined in
Observation~\ref{obs:v-e-average}. The following lemma summarizes the
findings of Section~\ref{sec:algorithm}:

\begin{lemma}
  \label{l:weights}
  If $F$ is an oriented 2-factor of $G$ and $B$ is a 4-distant set of
  edges, then there exists a function $\map {w_{F,B}}{\Phi(G)}
  {[0,1]}$ satisfying, for all $x\in V(T(G))$, the following
  conditions:
  \begin{enumerate}[\quad(i)]
  \item if $x\notin N_2(B)$, then 
    \begin{equation*}
      w_{F,B}[x] =
      \begin{cases}
        q^* & \text{for $x\in V(G)$,}\\
        p^* & \text{for $x\in E(F)$,}\\
        0 & \text{for $x\in E(G) - E(F)$,}
      \end{cases}
    \end{equation*}
  \item if $x_1,x_2 \in N_2(B)$ have the same
    $(F,B)$-type, then $w_{F,B}[x_1] = w_{F,B}[x_2]$.
  \end{enumerate}
\end{lemma}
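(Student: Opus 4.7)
The plan is to take $w_{F,B}$ to be the probability distribution on $\Phi(G)$ induced by the random full total independent set $\tilde T = \tilde T(F,B)$ produced by the algorithm of Section~\ref{sec:algorithm}: set $w_{F,B}(T) := \prob{\tilde T = T}$ for each $T \in \Phi(G)$. Since Phase 2 was designed precisely so that the output $\tilde T$ is always full and total independent, this assigns all probability mass inside $\Phi(G)$, so $w_{F,B}$ takes values in $[0,1]$ as required. Summing over total independent sets containing a prescribed $x \in V(T(G))$ gives the identity $w_{F,B}[x] = \prob{x \in \tilde T}$, which reduces the two assertions of the lemma to statements about these marginal probabilities.

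For assertion (i), I would use that each conflict listed in Table~\ref{tab:conflicts} is triggered by a single boundary edge $e \in B$ and modifies $T$ only inside $N_2(e)$; since $B$ is $4$-distant, the neighbourhoods $N_2(e)$ with $e\in B$ are pairwise disjoint, and the order of resolution is irrelevant. Thus for $x \notin N_2(B)$ we have $x \in \tilde T$ iff $x \in T$, and the values $q^*$ (for $x \in V(G)$) and $p^*$ (for $x \in E(F)$) follow from Observation~\ref{obs:v-e-average}. To handle $x \in E(G) - E(F)$ outside $N_2(B)$, I would observe that Phase~1 only ever adds vertices of $G$ or edges of $F$ to $T$, while the only conflict resolutions producing a non-$F$ edge (types IIIb and IVc) act strictly within $N_2(B)$; consequently such an $x$ lies outside $\tilde T$ with probability one.

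For assertion (ii), I would invoke Observation~\ref{obs:probs-main}(iii): by construction, the $(F,B)$-type of an element of $N_2(B)$ determines the local isomorphism class of its position relative to $F$ and the nearest boundary edge, which in turn determines the full joint distribution of the random decisions affecting whether $x \in \tilde T$. Hence if $x_1$ and $x_2$ share the same $(F,B)$-type then $\prob{x_1 \in \tilde T} = \prob{x_2 \in \tilde T}$, and the identity of the first paragraph converts this into the required equality $w_{F,B}[x_1] = w_{F,B}[x_2]$.

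The essential work is thus concentrated not in the present lemma but in the preceding analysis (Lemma~\ref{l:v-e-prob}, Observation~\ref{obs:v-e-average}, and the independence-and-conditioning computation sketched for conflict type IIIb that underlies Observation~\ref{obs:probs-main}); the main obstacle, already overcome there, is establishing the conditional independence statements that make those marginal probabilities depend only on the $(F,B)$-type. Given those results, the lemma is simply a packaging of the algorithm's output into a weight function suitable for the averaging argument in Section~\ref{sec:cubic}, and the only verification unique to the lemma itself is the easy case of non-$F$ edges outside $N_2(B)$.
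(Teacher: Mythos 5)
Your proposal is correct and follows essentially the same route as the paper: define $w_{F,B}(X)$ as the probability that the algorithm outputs $X$, reduce $w_{F,B}[x]$ to $\prob{x\in\tilde T}$, and then invoke Observations~\ref{obs:v-e-average} and \ref{obs:probs-main} (your explicit treatment of the non-$F$ edges outside $N_2(B)$, via the observation that only conflict types IIIb and IVc ever add such edges and only inside $N_2(B)$, is exactly the ``derived similarly'' step the paper leaves implicit).
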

\begin{proof}
  Consider the algorithm, described in Section~\ref{sec:algorithm},
  that produces a total independent set $\tilde T(F,B)$. For any full
  total independent set $X$ in $G$, let $w_{F,B}(X)$ be the
  probability that $\tilde T(F,B) = X$.

  If $v\in V(G) - N_2(B)$, then
  \begin{equation*}
    w_{F,B}[v] = \sum_{v\in X\in\Phi(G)} w_{F,B}(X) = \prob{v\in\tilde T(F,B)}
    = \prob{v\in T(F,B)} = q^*
  \end{equation*}
  by Observations~\ref{obs:v-e-average} and
  \ref{obs:probs-main}(i). The rest of part (i) is derived
  similarly. Part (ii) follows from
  Observation~\ref{obs:probs-main}(iii).
\end{proof}

By combining Lemmas~\ref{l:decompose} and \ref{l:weights}, we obtain
the following corollary which (unlike Lemma~\ref{l:weights}) is no
longer related to a particular edge set $B$:

\begin{lemma}
  \label{l:weights2}
  There are positive rational constants $\alpha$, $\beta$ and $\gamma$
  such that $\alpha+\beta+2\gamma=1$, $\beta > 1/4$, $\alpha \leq
  4/(3\ell)$ and the following holds: If $F$ is a 2-factor of $G$,
  then there exists a function $\map w {\Phi(G)} {[0,1]}$ such that:
  \begin{equation*}
    w[x] =
    \begin{cases}
      \beta & \text{if $x\in V(G)$},\\
      \gamma & \text{if $x\in E(F)$,}\\
      \alpha & \text{if $x\in E(G)-E(F)$.}
    \end{cases}
  \end{equation*}
  for all $x\in V(T(G))$.
\end{lemma}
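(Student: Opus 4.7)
The plan is to \emph{average} the weight functions provided by Lemma~\ref{l:weights} over a decomposition of $E(F)$. First I would fix $k$ large enough that $q^*:=q_k^* > 1/4$; this is possible because Lemma~\ref{l:sqrt} gives $\lim_{k\to\infty} q_k^* = 1-2(3-\sqrt{7}) = 2\sqrt{7}-5 > 1/4$. Next I would pick $\ell \geq 83$ and apply Lemma~\ref{l:decompose} (with $Q$ the edgeless graph on $E(F)$) to obtain a partition $E(F) = B_1 \sqcup \cdots \sqcup B_{3\ell}$ in which each $B_s$ is $(F,\ell)$-sparse. Lemma~\ref{l:weights} then yields, for each $s$, a function $w_{F,B_s}:\Phi(G)\to[0,1]$, and I would set
\begin{equation*}
  w := \frac{1}{3\ell}\sum_{s=1}^{3\ell} w_{F,B_s},
\end{equation*}
which takes values in $[0,1]$ as a convex combination.

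The crucial step will be showing that $w[x]$ depends only on whether $x$ lies in $V(G)$, in $E(F)$, or in $E(G)\setminus E(F)$. I would argue by enumerating, for each $x$, the values of $s$ with $x \in N_2(B_s)$ together with the corresponding $(F,B_s)$-types. For $v \in V(G)$, the six $F$-edges at $T(G)$-distance at most $2$ from $v$ are $vv^+, vv^-, v^+v^{++}, v^-v^{--}, v^*(v^*)^+, v^*(v^*)^-$; a routine check shows any two of them are at $T(G)$-distance at most $3$, so the 4-distance property of each $B_s$ forces the six edges into distinct sets $B_s$, and inspection of Figure~\ref{fig:type} identifies the six resulting types as $\{1,3,5,7,8,11\}$. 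An analogous count for $e \in E(F)$ gives three candidates $\{e,e^-,e^+\}$ with types $\{2,4,6\}$; for $e = uu^* \in E(G)\setminus E(F)$ it gives four candidates $uu^\pm, u^*(u^*)^\pm$ with types $\{9,9,10,10\}$. Because each of these local pictures is the same for every $x$ in its category (by cubicness together with the girth assumption, which makes $G$ locally tree-like), Lemma~\ref{l:weights}(ii) implies that $\sum_s w_{F,B_s}[x]$ is a constant, giving the required three values $\beta$, $\gamma$, $\alpha$ (each equal to $1/(3\ell)$ times that constant).

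To verify the three desired conditions I would proceed as follows. For $\alpha+\beta+2\gamma=1$: every full total independent set $X$ satisfies $|V(G)\cap X| + 2|E(G)\cap X| = |V(G)|$ (each vertex of $G$ is covered exactly once), and weighting this identity by $w(X)$, summing over $X$, and using $|E(F)|=|V(G)|$ and $|E(G)\setminus E(F)|=|V(G)|/2$ for cubic $G$ yields the relation. The bound $\alpha \leq 4/(3\ell)$ is immediate: $\alpha$ is $1/(3\ell)$ times a sum of at most four probabilities, each bounded by~$1$. For $\beta > 1/4$: expanding gives $\beta = q^* + \frac{1}{3\ell}\bigl(\sum_\tau W(\tau) - 6q^*\bigr) = q^* + O(1/\ell)$ with an implicit constant depending only on $k$, so taking $\ell$ large enough preserves the strict inequality $q^* > 1/4$. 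Positivity of $\alpha,\beta,\gamma$ and the rationality of the $p_k(i),q_k(i)$ give positive rational constants as required. The main obstacle will be the enumerative step of the second paragraph: proving that exactly the claimed families of nearby $F$-edges arise for every $x$, and that the 4-distance property really does force them into distinct $B_s$ — both rely delicately on the girth being large enough to preclude degenerate local structure.
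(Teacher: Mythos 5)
Your proposal is correct and follows essentially the same route as the paper: decompose $E(F)$ into $3\ell$ $(F,\ell)$-sparse sets via Lemma~\ref{l:decompose}, average the functions $w_{F,B}$ from Lemma~\ref{l:weights}, and use the fact that each $x$ acquires the same multiset of $(F,B)$-types (six for a vertex, three for an $F$-edge, four for a non-$F$-edge) to get constancy within each class, with $\size{\beta-q^*}\leq 6/(3\ell)$ giving $\beta>1/4$ and the four-incidence count giving $\alpha\leq 4/(3\ell)$. Your explicit enumeration of the types and the global counting identity for $\alpha+\beta+2\gamma=1$ are just more detailed versions of what the paper asserts.
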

\begin{proof}
  In this proof, we determine the requirements on the constants $k$
  and $\ell$.  We use Lemma~\ref{l:decompose} (with $E(Q)=\emptyset$)
  to find a decomposition $\BB$ of $E(F)$ into $3\ell$
  $(F,\ell)$-sparse sets; this can be done whenever $\ell\geq 83$ but
  we will require $\ell\geq 96$ to be consistent with the rest of the
  proof. For each $(F,\ell)$-sparse set $B\in\BB$, consider the
  function $w_{F,B}$ of Lemma~\ref{l:weights}, and define
  \begin{equation*}
    w = \sum_{B\in\BB} \frac{w_{F,B}}{3\ell}.
  \end{equation*}
  Since each edge of $F$ is contained in exactly one $B\in\BB$, the
  number of times that $x\in V(T(G))$ acquires a particular
  $(F,B)$-type as $B$ ranges over $\BB$ is independent of $x$. It
  follows that the change in probabilities associated with the
  resolution of conflicts is the same for all $x\in V(G)$, for all
  $x\in E(F)$ and for all $x\in E(G)-E(F)$. In this way, the values
  $0, q^*$ and $p^*$ of Lemma~\ref{l:weights} change into $\alpha$,
  $\beta$ and $\gamma$, respectively.

  We claim that for large $\ell$, $\beta$ is close to $q^*$. To see
  this, observe that every vertex $v\in V(G)$ is contained in exactly
  six of the sets $N_2(e)$, where $e\in E(F)$. By
  Lemma~\ref{l:weights},
  \begin{equation*}
    \size{\beta - q^*} \leq \frac6{3\ell}.
  \end{equation*}
  Furthermore, as $k$ grows large, $q^*$ tends to $1-2(3-\sqrt{7})
  \approx 0.2915$ by Lemma~\ref{l:sqrt}. Thus, for large enough $k$
  and $\ell$, we will have $\beta > 1/4$. In fact, it is routine to
  check that, for instance, the values $k=11$ and $\ell=96$ are
  sufficient.

  It remains to prove that $\alpha \leq 4/(3\ell)$. For any particular
  choice of $B$, an edge $e$ of $E(G)-E(F)$ may only be included in
  $\tilde T$ if it is incident with an edge of $B$. Since this will
  happen for 4 out of the $3\ell$ choices for $B$, the inequality
  follows.
\end{proof}

We can now prove Theorem~\ref{t:main} for cubic bridgeless graphs. By
Theorem~\ref{t:edge}, such a graph $G$ has a fractional
3-edge-colouring $c$. This is equivalent to the existence of perfect
matchings $M_1,\dots,M_{3N}$ such that each edge is contained in
exactly $N$ of them. For $1\leq i\leq 3N$, let $F_i$ be the 2-factor
complementary to $M_i$.

For $1\leq i \leq 3N$, we apply Lemma~\ref{l:weights2} to the 2-factor
$F_i$ and call the resulting function $w_i$. For a total independent
set $X\in\Phi(G)$, put
\begin{equation*}
  w'(X) = \sum_{i=1}^{3N} \frac{w_i(X)}{3N}.
\end{equation*}
Since each edge of $G$ is contained in $2N$ of the factors $F_i$, each
edge gets the same weight $w'[e] = (\alpha+2\gamma)/3$. Similarly,
each vertex gets weight $w'[v] = \beta$. Observe that $w'[v] > w'[e]$
as $4\beta > 1 = \alpha + \beta + 2\gamma$. Thus, we may use the
fractional 3-edge-colouring $c$ to make the weight on edges equal to
that on vertices. Specifically, extend $c$ by setting $c(Y)=0$ for any
$Y\in\Phi(G)$ that is not a perfect matching, and define
\begin{equation*}
  w(X) = \frac1\beta \cdot w'(X) +
  \left(1-\frac{\alpha+2\gamma}{3\beta}\right)\cdot c(X).
\end{equation*}
It is easy to see that $w[x]=1$ for all $x\in V(T(G))$, so $w$ is a
fractional total colouring. Moreover, we claim that $\size w = 4$. To
see this, consider the set $\Setx{x_1,x_2,x_3,x_4}$ consisting of a
vertex of $G$ and the three adjacent edges, and note that since each
set from $\Phi(G)$ contains exactly one $x_i$, we have $\size w =
\sum_i w[x_i] = 4$. This proves Theorem~\ref{t:main} for $g\geq
15k\ell$, where the required values of $k$ and $\ell$ have been
identified in the proof of Lemma~\ref{l:weights2} as $k=11$ and
$\ell=96$. Thus, $g\geq 15\,840$ is sufficient.


\section{Subcubic graphs}
\label{sec:subcubic}

We are now ready to prove Theorem~\ref{t:main}. We show, by induction
on the order of the graph $G$, that if $G$ is a graph with maximum
degree at most 3 and girth at least $g$, then $\chi''_f(G) \leq
4$. The assertion is true for graphs with $\Delta(G) \leq 2$ by
Theorem~\ref{t:kilakos-reed} and for bridgeless cubic graphs by the
above.

Suppose first that $G$ contains a bridge $e$ with endvertices $x_1$
and $x_2$. For $i=1,2$, let $G_i$ be the component of $G-e$ containing
$x_i$. By induction, each $G_i$ has a fractional total colouring $w_i$
with $\size{w_i} \leq 4$. We may assume without loss of generality
that $\size{w_i} = 4$.

In view of Lemma~\ref{l:basic}, there is a multiset $\WW_i$ of
$4N$ total independent sets in $G_i$, such that each $x\in V(T(G_i))$
is contained in $N$ of the sets in $\WW_i$ (for a suitable integer
$N$). Let us enumerate the members of each $\WW_i$ as
$W_{i,1},\dots,W_{i,4N}$ in such a way that:
\begin{itemize}
\item $x_1$ is contained in $W_{1,1},\dots,W_{1,N}$,
\item $x_2$ is contained in $W_{2,N+1},\dots,W_{2,2N}$,
\item neither $x_i$ nor any edge incident to it are contained in
  $W_{i,j}$ for $j > 3N$.
\end{itemize}

We construct a multiset $\WW=\Setx{W_1,\dots,W_{4N}}$ of total
independent sets in $G$ by setting
\begin{equation*}
  W_j =
  \begin{cases}
    W_{1,j} \cup W_{2,j} & \text{if $j \leq 3N$},\text{ and}\\
    W_{1,j} \cup W_{2,j} \cup \Setx{e} & \text{otherwise}.
  \end{cases}
\end{equation*}
It is easy to see that each set $W_j$ is total independent and each
$x\in V(T(G))$, including $e$, is contained in $N$ of these
sets. Hence, $G$ has a fractional total colouring of size 4.

Having dealt with bridges, we may assume that $G$ is a bridgeless
subcubic graph. Let $D = \sum_{v\in V(G)} (3-d(v))$. We know that
$D>0$; assume now that $D \geq 2$. It is well-known that there exists
a $D$-regular graph $H$ with girth at least $g$; the construction
given in \cite[Solution to Problem 10.12]{bib:Lov-combinatorial}
moreover ensures that $H$ is 2-connected. Replace each vertex $w$ of
$H$ with a copy of $G$, and for each vertex $v$ of this copy, choose
$3-d(v)$ edges of $H$ formerly incident with $w$ and redirect them to
$v$. The result is a cubic bridgeless graph of girth at least
$g$. Since any fractional total 4-colouring of this graph yields a
fractional total 4-colouring of its subgraph $G$, this case is
resolved.

It remains to consider the case that $D=1$, i.e., all the vertices of
$G$ have degree 3 except for one vertex $z$ of degree 2. Let the
neighbors of $z$ be denoted by $x$ and $y$. The graph $G_z$, obtained
by suppressing $z$ (i.e., contracting one of the two edges adjacent to
$z$), is cubic and bridgeless. Let $F_1,\dots,F_{3N}$ be a multiset of
2-factors of $G_z$ such that each edge of $G_z$ is contained in
exactly $2N$ of them. We may assume that the edge $e=xy$ is contained
in $F_1,\dots,F_{2N}$.

We follow the approach of Sections~\ref{sec:algorithm} and
\ref{sec:cubic}, with modifications that we describe next.

Step I: We first process the 2-factors $F_1,\dots,F_{2N}$. We embed
$G$ in a graph $G'$ obtained as follows. Let $H'$ be a hamiltonian
cubic graph of girth at least $g$ (see~\cite{bib:Big-constructions}
for a construction) and let $S'$ be a Hamilton cycle of
$H'$. Subdivide an edge of $S'$, creating a vertex $z^*$. The graph
$G'$ is the disjoint union of $G$ and $H'$ with an added edge
$zz^*$. Note that $G'$ is cubic. It will not pose any problem that
$G'$ contains a bridge.

For $1\leq i\leq 2N$, we define $F'_i$ as the 2-factor of $G'$
corresponding to $F_i$ with the cycle $S'$ added. Using
Lemma~\ref{l:decompose} (with $E(Q)=\emptyset$), we find a
decomposition $\BB_i$ of $E(G')$ into $(F'_i,\ell)$-sparse sets. For
$B\in\BB_i$, we run the algorithm of Section~\ref{sec:algorithm} that
constructs the sets $T = T(F'_i,B)$ and $\tilde T(F'_i,B)$ without
modifications. Following the proof of Lemma~\ref{l:weights2}, we find
a function $w'_i$ defined on $\Phi(G')$, satisfying the conclusion of
that lemma with respect to the graph $G'$ and 2-factor
$F'_i$. Restricting to $G$, we obtain a function $w_i$ defined on
$\Phi(G)$ which assigns weight $\beta$ to the vertices of $G$,
$\gamma$ to edges of $G$ in $F'_i$ and $\alpha$ to the other edges of
$G$, where $\alpha$, $\beta$, $\gamma$ are the constants from
Lemma~\ref{l:weights2}.

Altogether, Step I provides us with $2N$ functions $w_1,\dots,w_{2N}$
on $\Phi(G)$ with the above property.

Step II: To process the 2-factors $F_{2N+1},\dots,F_{3N}$, we first
construct a cubic graph $H$. For some $s\geq g/2$, where $g$ is the
girth of $G$, take $s$ copies $H_1,\dots,H_s$ of $G-z$. For
$j=1,\dots,s$, let the copies of $x$ and $y$ in $H_j$ be denoted by
$x_j$ and $y_j$, and let $x'_j$ and $y'_j$ be new vertices. The graph
$H$ is obtained by taking the disjoint union of all the copies $H_j$
and the cycle $S = x'_1y'_1x'_2y'_2\dots x'_sy'_s$, and adding the
edges $x_jx'_j$ and $y_jy'_j$ for all $j=1,\dots,s$. It is easy to see
that $H$ is cubic bridgeless and its girth is at least $g$.

For each 2-factor $F_i$ of $G_z$ ($2N+1 \leq i \leq 3N$) there is a
corresponding 2-factor $F''_i$ of $H$ obtained by taking a copy of
$F_i$ in each graph $H_j$ ($1\leq j \leq s$) and adding the cycle
$S$. We aim to use Lemma~\ref{l:decompose} in $H$ to find a
decomposition of each $E(F''_i)$, $2N+1 \leq i \leq 3N$, into
$(F''_i,\ell)$-sparse sets.

As we will see, we need to ensure additionally that none of the sets
contains an edge incident with $x_j$ and another edge incident with
$y_j$ for any $j=1,\dots,s$. To this end, we apply
Lemma~\ref{l:decompose} to a graph $Q$ on $E(F''_i)$ constructed as
follows. The edge set of $Q$ contains, for each $j=1,\dots,s$, all
four edges $e_xe_y$ where $e_x$ is an edge of $F''_i$ incident with
$x_j$ and $e_y$ is an edge of $F''_i$ incident with $y_j$. Clearly,
$\Delta(Q) = 2$. Since $\ell\geq 89 = 83+3\Delta(Q)$, $Q$ may indeed
be used in Lemma~\ref{l:decompose}.

The graph $H$ is cubic, so we can run the algorithm of
Section~\ref{sec:algorithm} on it without modifications. For each
choice of a set of boundary edges $B$ (an $(F''_i,\ell)$-sparse set
obtained from Lemma~\ref{l:decompose}) and each total independent set
$\tilde T$ that the algorithm produces, we consider the total
independent set $\tilde T''$ in $G$ obtained by the following rules:
\begin{itemize}
\item each vertex and edge of $G-z$ is in $\tilde T''$ if and only if
  the corresponding vertex or edge in $H_1$ is in $\tilde T$,
\item if $x_1x'_1 \in \tilde T$, then we add $xz$ to $\tilde T''$,
\item if $y_1y'_1 \in \tilde T$, then we add $yz$ to $\tilde T''$,
\item if none of $x_1,y_1,x_1x'_1$ and $y_1y'_1$ is in $\tilde T$, we add
  $z$ to $\tilde T''$.
\end{itemize}
Each set $\tilde T''$ is total independent in $G$. To verify this, we
have to check that $\tilde T''$ does not contain both $xz$ and $yz$,
i.e., that $\tilde T$ does not contain both $x_1x'_1$ and
$y_1y'_1$. Our algorithm may add an edge of $E(H)-E(F''_i)$ to $\tilde
T$ only if the edge is incident with an edge of $B$. Since $B$ is
chosen using the above graph $Q$, this cannot happen for $x_1x_1'$ and
$y_1y_1'$ at the same time.

Based on the sets $\tilde T''$, we define the associated functions
$\map{w_i}{\Phi(G)}{[0,1]}$ (where $2N+1 \leq i \leq 3N$), obtained as
in the proof of Lemma~\ref{l:weights2}. Each $w_i$ assigns weight
$\beta$ to all vertices except $z$, $\gamma$ to all edges of $F_i$ and
$\alpha$ to all edges of $E(G)-E(F_i)$.

\begin{sloppypar}
  We need to ensure that $w_i[z] \geq \beta$. By the construction,
  $w_i[z]$ equals $\sum_{X} w_i(X)$, where $X$ ranges over total
  independent sets in $G$ containing none of $x,y,xz$ and $yz$. We
  thus have:
  \begin{equation}\label{eq:weight-z}
    w_i[z] \geq 1-w_i[x]-w_i[y]-w_i[xz]-w_i[yz] = 1-2\beta-2\alpha.
  \end{equation}
  Note that the inequality $1-2\beta-2\alpha \geq \beta$ is equivalent
  to $\gamma \geq \beta + \alpha/2$. As $\ell$ grows large, $\gamma$
  is close to $p^*$, which in turn is close to $3-\sqrt 7 \approx
  0.3542$ for large $k$ (cf. Lemma~\ref{l:sqrt}). Similarly, $\beta$
  tends to $1-2(3-\sqrt7) = 2\sqrt7 - 5 \approx 0.2915$. Furthermore,
  Lemma~\ref{l:weights2} asserts that $\alpha \leq 4/(3\ell)$, so for
  large $\ell$ and $k$ we will indeed have $\gamma \geq \beta +
  \alpha/2$. In particular, the values $k=11$ and $\ell=96$, used in
  Section~\ref{sec:cubic}, are sufficient.
\end{sloppypar}

Thus, $w_i[z] \geq \beta$. Since we may remove $z$ from any total
independent set as required, it may be assumed that $w_i[z] = \beta$. 

Following the argument at the end of Section~\ref{sec:cubic}, we can
define
\begin{equation*}
  w' = \sum_{i=1}^{3N} \frac{w_i}{3N}
\end{equation*}
and note that $w'$ assigns weight $\beta$ to each vertex and weight
$(\alpha+2\gamma)/3$ to each edge. Unfortunately, we are no longer
able to augment $w'$ to a fractional total 4-colouring using a
fractional 3-edge-colouring, since the latter need not exist in
$G$. We need to modify the proof in yet another way.

In the recurrence of Section~\ref{sec:prob}, let us replace the
equation~(\ref{eq:q}) by
\begin{equation*}
  q_k(i) = \xi \cdot p_k(i) \left( 1 - \frac1k - \frac1k\sum_{j=1}^{i-1}
    p_k(j)\right),
\end{equation*}
where $\xi$ is a real number from the interval $[0,1]$. In the
algorithm of Section~\ref{sec:algorithm}, we adjust the rule for the
inclusion of a vertex accordingly: whenever a vertex $u_j$ is to be
included by the original algorithm (that is, the events
(\ref{eq:incl-vertex1}) and (\ref{eq:incl-vertex2}) occur), we decide
with probability $1-\xi$ not to include it. With this modification,
Observation~\ref{obs:v-e-average} analyses the algorithm correctly if
we interpret $p^*$ and $q^*$ as functions of $\xi$. Similarly, let us
regard $\alpha$, $\beta$ and $\gamma$ as functions of $\xi$, so we can
write, e.g., $\beta = \beta(\xi)$. Likewise, for a function such as
$q_k(i)$ we may write $q_k(i) = q_k(i,\xi)$. Lemma~\ref{l:weights2}
remains valid, except for the assertion that $\beta > 1/4$. Indeed,
$\beta(0)$ will be small, since for $\xi=0$, the only way that a
vertex will be included in the set $\tilde T$ is through the
resolution of a conflict of type (IIIc). An argument similar to the
one used to bound $\alpha$ in Lemma~\ref{l:weights2} shows that
$\beta(0) \leq 1/(3\ell) < 1/4$.

Each of the functions $p_k(i,\xi)$ and $q_k(i,\xi)$ is easily seen to
be continuous in $\xi$. As we have observed in
Section~\ref{sec:algorithm}, the probability of a particular type of
conflict at a given edge can be expressed in terms of these functions,
and as a function of $\xi$ it will be continuous. From this it follows
that $\beta(\xi)$ is continuous, so there is a value $\eta$ for which
$\beta(\eta) = 1/4$. If we use this value in our algorithm and
construct the functions $w_i$ and $w'$ as above, each vertex $v$ will
get weight $w'[v] = 1/4$. Similarly, each edge $e$ will get weight
$(\alpha(\eta)+2\gamma(\eta))/3 = (1-\beta(\eta))/3 = 1/4$. Thus, the
function $4w$ is a fractional total colouring of weight 4. The proof
of Theorem~\ref{t:main} is complete.


\section{Graphs with even maximum degree}
\label{sec:even}

In this section, we show that with minor modifications, the method
used to prove Theorem~\ref{t:main} yields a proof of
Theorem~\ref{t:even}.

Let $G$ be a graph of maximum degree $\Delta$, where $\Delta\geq 4$ is
even. Using the method described in Section~\ref{sec:subcubic}, we
construct a $\Delta$-regular graph $H$ such that $H$ contains $G$ as a
subgraph and the girth of $H$ equals that of $G$ (at least if $G$
contains a cycle, which may be assumed without loss of generality). A
well-known result of Petersen (see, e.g.,~\cite[Corollary
2.1.5]{bib:Die-graph}) implies that $H$ can be decomposed into
edge-disjoint 2-factors $F_1,\dots,F_{\Delta/2}$ of $H$.

For each $i=1,\dots,\Delta/2$ and suitable constants $k$, $\ell$, we
use an analogue of Lemma~\ref{l:decompose} to find a decomposition
$\BB_i$ of $E(F_i)$ into $(F_i,\ell)$-sparse sets. For $B\in\BB_i$, we
then run the algorithm of Section~\ref{sec:algorithm} with a single
modification: each vertex $u$ will now have $\Delta-2$ `mates' (rather
than just one), and will only be included in the set $T(F_i,B)$ if
this set contains none of the mates whose level is lower than that of
$u$; if a mate of $u$ has the same level as $u$, then neither of them
will be included in $T(F_i,B)$. Although we can no longer use the
analysis from Section~\ref{sec:prob}, the following variant of
Lemma~\ref{l:weights2} holds:
\begin{lemma}
  \label{l:weights3}
  Let $\Delta\geq 4$ be an even integer. There are positive rational
  constants $\alpha$, $\beta$ and $\gamma$ such that
  $(\Delta-2)\alpha+\beta+2\gamma=1$ and the following holds: If $F$
  is a 2-factor of $G$, then there exists a function $\map w {\Phi(G)}
  {[0,1]}$ such that:
  \begin{equation*}
    w[x] =
    \begin{cases}
      \beta & \text{if $x\in V(G)$},\\
      \gamma & \text{if $x\in E(F)$,}\\
      \alpha & \text{if $x\in E(G)-E(F)$}
    \end{cases}
  \end{equation*}
  for all $x\in V(T(G))$.
\end{lemma}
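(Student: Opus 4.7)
The plan is to mirror the proof of Lemma~\ref{l:weights2} with the modified algorithm in place of the single-mate one. For the given 2-factor $F$ of $G$, I would first invoke an analogue of Lemma~\ref{l:decompose} to partition $E(F)$ into $(F,\ell)$-sparse sets $B\in\BB$; this is possible once $\ell$ is chosen sufficiently large in terms of $\Delta$, since the maximum degree of the auxiliary graph in that proof grows only linearly with $\Delta$. For each such $B$, I would run the modified algorithm and set $w_{F,B}(X)$ equal to the probability that $\tilde T(F,B)=X$, for $X\in\Phi(G)$.

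Next, I would re-prove the analogues of Lemma~\ref{l:v-e-prob} and Observation~\ref{obs:v-e-average}. The conditional independence statements in Lemma~\ref{l:indep-main} depend only on the girth and extend essentially verbatim after enlarging $\PP'(z)$ to cover all $\Delta-2$ mates. Consequently, the inductive computation of $\cprob{u\in T}{\lambda(u)=t}$ proceeds as before, except that the factor arising from condition~(\ref{eq:incl-vertex2}) is raised to the $(\Delta-2)$-th power to account for the independent contributions of all mates. This yields the recurrence
\begin{equation*}
  q(t) = p(t) \Bigl(1 - \tfrac{1}{k} - \tfrac{1}{k}\sum_{j=1}^{t-1} p(j)\Bigr)^{\Delta-2}, \qquad 2p(t)+q(t)=1,
\end{equation*}
which still defines positive rationals $p(t),q(t)$ for $t=1,\dots,k$. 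Setting $p^*=\tfrac{1}{k}\sum_t p(t)$ and $q^*=\tfrac{1}{k}\sum_t q(t)$, we obtain $\prob{v\in T}=q^*$ for every vertex $v$ and $\prob{e\in T}=p^*$ for every $F$-edge $e$. The conflict analysis of Section~\ref{sec:algorithm} is unaffected because conflicts remain localised to $N_2(B)$, so averaging $w_{F,B}$ over $B\in\BB$ produces constants $\alpha,\beta,\gamma$ with the required per-type behaviour.

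Finally, the equation $(\Delta-2)\alpha+\beta+2\gamma=1$ is automatic: the averaged function $w$ is a convex combination of probability distributions on $\Phi(G)$, hence itself such a distribution, and each $X\in\Phi(G)$ covers every vertex $v$ exactly once. Summing $w(X)$ over the $X$ containing $v$ or an edge incident to $v$ therefore equals both $1$ and $\beta+2\gamma+(\Delta-2)\alpha$. Positivity and rationality of the three constants follow from the finite combinatorial structure of the algorithm with rational transition probabilities, provided $k,\ell$ are large enough that every element type appears in $\tilde T$ with positive probability.

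The main obstacle is the modified recurrence. The clean closed form $\lim_k p^*_k = 3-\sqrt 7$ obtained via the ODE $f'=f^3$ in Lemma~\ref{l:sqrt} has no direct analogue; the limiting equation becomes $f'=f^{\Delta-1}$ up to lower-order corrections, which is still integrable but produces less tidy constants. Fortunately, Lemma~\ref{l:weights3} as stated demands no numerical bound on $p^*$ or $q^*$, only the conservation equation together with positivity, so this difficulty is deferred to the proof of Theorem~\ref{t:even}, where an inequality analogous to $\beta>1/4$ will eventually be needed.
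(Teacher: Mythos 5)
Your overall strategy matches the paper's: the paper gives no detailed proof of Lemma~\ref{l:weights3}, stating only that it follows the same lines as Lemma~\ref{l:weights2}, and your outline (decompose $E(F)$ into $(F,\ell)$-sparse sets via an analogue of Lemma~\ref{l:decompose}, run the modified algorithm, average over $B\in\BB$, and read off the conservation law from the fact that each full total independent set meets the closed star of a vertex exactly once) is exactly that route. However, your recurrence is wrong, and the error sits in the step the whole construction depends on. You write $q(t) = p(t)\bigl(1-\tfrac1k-\tfrac1k\sum_{j<t}p(j)\bigr)^{\Delta-2}$, i.e., you raise the cubic factor to the power $\Delta-2$ without changing what is inside it. But the term $p(j)$ in the cubic recurrence is the conditional probability that the (unique) mate $v$ of $u$, at level $j<t$, lies in $T$; this equals $p(j)$ only because $v$'s own unique mate is $u$, whose level exceeds $j$, so condition~(\ref{eq:incl-vertex2}) is vacuous for $v$ (Lemma~\ref{l:v-e-prob}(ii)). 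When $\Delta\geq 4$, the mate $v$ has $\Delta-3$ \emph{other} mates, and its conditional inclusion probability is not $p(j)$ but the strictly smaller quantity $\tilde q(j)$. The correct system, which the paper displays in the proof of Proposition~\ref{p:beta}, is
\begin{align*}
  2p_k(i)+q_k(i)&=1,\\
  q_k(i)&=p_k(i)\Bigl(1-\tfrac1k-\tfrac1k\sum_{j=1}^{i-1}\tilde q_k(j)\Bigr)^{\Delta-2},\\
  \tilde q_k(i)&=p_k(i)\Bigl(1-\tfrac1k-\tfrac1k\sum_{j=1}^{i-1}\tilde q_k(j)\Bigr)^{\Delta-3}.
\end{align*}

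This is not a cosmetic slip. The values $p(t),q(t)$ are fed into the seed choice, and the double induction of Lemma~\ref{l:v-e-prob} closes only if the recurrence is the exact fixed-point equation of the algorithm; with your version the inclusion probability of $u^i_j$ drifts with $j$ and is no longer a function of the level alone, which destroys the constancy of $w[x]$ on $V(G)$ and on $E(F)$ --- the very content of the lemma. With the corrected recurrence (and the correspondingly adjusted independence lemma, which, as you say, extends once $\PP'(z)$ accounts for all $\Delta-2$ mates), the rest of your argument --- the identity $(\Delta-2)\alpha+\beta+2\gamma=1$ from full coverage of a degree-$\Delta$ vertex, positivity, and the deferral of all quantitative bounds to Proposition~\ref{p:beta} --- is sound.
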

Lemma~\ref{l:weights3} can be proved along essentially the same lines
as the corresponding part of Lemma~\ref{l:weights2}.

As $i$ ranges over $1,\dots,\Delta/2$, the average of the weights
$w[x]$ given to $x\in V(T(H))$ is
\begin{align*}
  \beta & \text{\quad if $x$ is a vertex,}\\
  \frac{(\Delta-2)\alpha + 2\gamma}\Delta & \text{\quad if $x$ is an edge.}
\end{align*}
A simple computation shows that if $\beta \geq 1/(\Delta+1)$, then the
average value for a vertex is greater than or equal to that for an
edge. In this case, we can use the argument described at the end of
Section~\ref{sec:subcubic}, modifying the equivalent of the
equation~(\ref{eq:q}) by introducing a parameter $\xi$ and using a
value of $\xi$ for which both of the above averages are equal to
$1/(\Delta+1)$. The associated probability distribution on the full
total independent sets then clearly determines a fractional total
$(\Delta+1)$-colouring of $H$ and hence of $G$.

It remains to derive the lower bound on the constant $\beta$:
\begin{proposition}\label{p:beta}
  Let $\Delta\geq 4$. In Lemma~\ref{l:weights3}, we can choose $\beta$
  in such a way that $\beta > 1/(\Delta+1)$.
\end{proposition}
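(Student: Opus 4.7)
The plan is to mirror the limit argument of Lemma~\ref{l:sqrt} in the modified algorithm of Section~\ref{sec:even}, where each vertex has $\Delta-2$ mates. First I would set up the recurrence for the relevant probabilities. The identity $2p_k(i)+q_k(i)=1$ still holds because the edge-inclusion rule is unchanged and the events $e^-\in T$, $u^-\in T$ remain mutually exclusive. The vertex-inclusion rule now requires that each of the $\Delta-2$ mates either has a strictly lower level and is not in $T$, or has a strictly higher level; since the mate levels are independent and uniform on $\{1,\dots,k\}$, this raises the single-mate survival probability to the $(\Delta-2)$-th power, yielding
\[
q_k(i) = p_k(i)\Bigl(1 - \tfrac{1}{k} - \tfrac{1}{k}\sum_{j=1}^{i-1} p_k(j)\Bigr)^{\Delta-2}.
\]

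Next I would run the Arzel\`a-Ascoli argument of Lemma~\ref{l:sqrt} verbatim. The piecewise-linear interpolation $h_k$ of $p_k$ converges uniformly to a continuous $f$ on $[0,1]$. Writing $F(x) = \int_0^x f$, the limit of the recurrence is $f = 1/(2 + (1-F)^{\Delta-2})$, which separates as $(2+(1-F)^{\Delta-2})\,dF = dx$. Integrating with $F(0) = 0$ gives
\[
2F(x) + \frac{1 - (1-F(x))^{\Delta-1}}{\Delta-1} = x,
\]
so evaluating at $x=1$ yields the implicit equation $(1-p^*)^{\Delta-1} = 2(\Delta-1)p^* - (\Delta-2)$ for $p^* = \lim p_k^*$. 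As in the proof of Lemma~\ref{l:weights2}, conflict resolution changes the final weight $\beta$ from $q_k^* = 1 - 2p_k^*$ by only $O(1/\ell)$, so it suffices to prove the strict inequality $1 - 2p^* > 1/(\Delta+1)$, i.e., $p^* < a_0 := \Delta/(2(\Delta+1))$.

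The hard part will be this strict inequality. Since the right side of the implicit equation is strictly increasing in $p^*$ and the left side strictly decreasing, it suffices to check that at $a_0$ the left side is already smaller. A direct computation gives
\[
\mathrm{RHS}(a_0) = \frac{\Delta(\Delta-1)}{\Delta+1} - (\Delta-2) = \frac{2}{\Delta+1}, \qquad \mathrm{LHS}(a_0) = \Bigl(\tfrac{\Delta+2}{2(\Delta+1)}\Bigr)^{\Delta-1}.
\]
Using the elementary bound $(1 + 1/(\Delta+1))^{\Delta-1} < e$, I obtain $\mathrm{LHS}(a_0) < e/2^{\Delta-1}$, so the claim reduces to $e(\Delta+1) < 2^{\Delta}$. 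This holds for $\Delta = 4$ (as $5e \approx 13.6 < 16$) and then for all larger $\Delta$ by a trivial induction, since $2^{\Delta+1} - 2^\Delta = 2^\Delta > e$ whenever $\Delta \geq 2$.

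Once $q^* > 1/(\Delta+1)$ is established strictly, a continuity argument — the conflict-correction probabilities depend continuously on $\xi$ and the $O(1/\ell)$ gap above is uniform — shows that for sufficiently large $k$ and $\ell$, the constant $\beta$ produced by Lemma~\ref{l:weights3} indeed exceeds $1/(\Delta+1)$, completing the proof.
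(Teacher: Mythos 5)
Your proposed recurrence is wrong, and the error is precisely where $\Delta\geq 4$ diverges from the cubic case. You write
\[
q_k(i) = p_k(i)\Bigl(1 - \tfrac{1}{k} - \tfrac{1}{k}\sum_{j=1}^{i-1} p_k(j)\Bigr)^{\Delta-2},
\]
treating $p_k(j)$ as the probability that a mate $m$ of $u$ is in $T$ given $\lambda(m)=j<\lambda(u)$. That identification is correct only when $m$ has a \emph{single} mate (namely $u$), which is the $\Delta=3$ case: there, conditioning on $\lambda(u)>\lambda(m)$ tells us $m$'s only mate has higher level, so by the analogue of Lemma~\ref{l:v-e-prob}(ii) the probability is $p(j)$. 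But for $\Delta\geq 4$ the mate $m$ has $\Delta-2$ mates of its own, only \emph{one} of which is $u$. Knowing $\lambda(u)>\lambda(m)$ resolves only one of $m$'s mate conditions; the other $\Delta-3$ mates of $m$ still have unconditioned random levels. So the correct conditional probability that $m\in T$ is what the paper calls $\tilde q_k(j)$, defined by
\[
\tilde q_k(i) = p_k(i)\Bigl(1 - \tfrac{1}{k} - \tfrac{1}{k}\sum_{j=1}^{i-1}\tilde q_k(j)\Bigr)^{\Delta-3},
\]
and the vertex recurrence must read $q_k(i)=p_k(i)\bigl(1-\tfrac1k-\tfrac1k\sum_{j<i}\tilde q_k(j)\bigr)^{\Delta-2}$. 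Your version and the correct one coincide exactly when $\Delta-3=0$, which is why it reproduces $3-\sqrt7$ for $\Delta=3$; but for $\Delta\geq 4$ they give genuinely different limits. For example at $\Delta=4$ your implicit equation gives $p^*\approx 0.36$ and $q^*\approx 0.28$, while the true recurrence gives $p^*\leq 0.29$ and $Q(1)\geq 0.42$.

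Consequently, although your algebraic manipulations (separating the ODE, getting the implicit equation $(1-p^*)^{\Delta-1}=2(\Delta-1)p^*-(\Delta-2)$, the monotonicity argument, and the bound $e(\Delta+1)<2^\Delta$) are internally correct and nicely executed, they analyse the wrong differential equation and so do not establish the inequality for the quantity the algorithm actually produces. The paper instead introduces the auxiliary sequence $\tilde q_k$, passes to the ODE $F'=-F^{\Delta-3}/(F^{\Delta-2}+2)$ with $F=1-\tilde Q$, relates $Q(1)=(1-F(1)^2)/2$, and bounds $F(1)$ by showing $F''>0$, which gives $F(x)\geq 1-x/3$ and then an explicit upper bound for $F(1)$ via integration. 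If you want to salvage your approach, you would need either to introduce the analogue of $\tilde q_k$ and redo the limit analysis for the coupled system, or to prove a comparison lemma showing that the $q^*$ from your simpler recurrence is a lower bound for the true $Q(1)$ for all $\Delta\geq 4$; neither step is present.
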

\begin{proof}
  For the present setting, the recurrence of Section~\ref{sec:prob}
  changes to 
  \begin{align}
    2p_k(i) + q_k(i) &= 1,\notag\\
    q_k(i) &= p_k(i) \left( 1 - \frac1k - \frac1k\sum_{j=1}^{i-1}
      \tilde q_k(j)\right)^{\Delta-2},\label{eq:tildeq}\\
    \tilde q_k(i) &= p_k(i) \left( 1 - \frac1k -
      \frac1k\sum_{j=1}^{i-1} \tilde q_k(j)\right)^{\Delta-3},\notag
  \end{align}
  where the term $\tilde q_k(i)$ represents the probability that a
  vertex $u$ is included in the total independent set $T$ assuming
  that the level of $u$ equals $i$ and the level of a given mate of
  $u$ exceeds $i$ (for the number of levels being $k$). Following the
  method of the proof of Lemma~\ref{l:sqrt}, we define piecewise
  linear functions $h_k$ on the interval $[0,1]$ by the equations
  \begin{equation*}
    h_k\left(\frac{i-1}{k-1}\right) = \tilde q_k(i),
  \end{equation*}
  where $i=1,\dots,k$, and by the requirement that $h_k$ be linear on
  each $[\tfrac{i-1}{k-1},\tfrac{i}{k-1}]$ for $i \leq k-1$. We let
  $\map {\tilde q}{[0,1]} {[0,1]}$ be the limit of $h_k$ as
  $k\to\infty$. Thus, $\tilde q$ can be viewed as an asymptotic
  version of $\tilde q_k$. Note that in the limit, $\sum_{j=1}^{i-1}
  \tilde q_k(j)/k$ becomes $\int_0^x\!\tilde q(t)\,dt$ (for a suitable
  $x$). In accordance with~(\ref{eq:tildeq}), we set
  \begin{equation}
    q(x) = \tilde q(x) \cdot \left( 1- \int_0^x\!
    \tilde q(t)\,dt\right). \label{eq:q2}
  \end{equation}
  If we define, for $x\in [0,1]$,
  \begin{align*}
    \tilde Q(x) &= \int_0^x\! \tilde q(t)\,dt,\\
    Q(x) &= \int_0^x\! q(t)\,dt,\\
  \end{align*}
  then $Q(1)$ is the limit value of $\sum_{i=1}^k q_k(i)/k$, i.e., the
  asymptotic probability of the inclusion of a vertex in the set
  constructed by Phase I of our algorithm. It follows that to prove
  the assertion of the proposition, it suffices to prove
  \begin{equation}
    Q(1) > \frac1{\Delta+1}. \label{eq:Q1}
  \end{equation}
  This is what we do in the rest of this proof.

  Using the definition of $q_k$ in~(\ref{eq:tildeq}) and passing to
  the asymptotic form, we find that
  \begin{equation}\label{eq:q-deriv}
    \tilde Q'(x) = \frac{1-Q'(x)}2 \cdot \left( 1 - \tilde
      Q(x)\right)^{\Delta-3}.
  \end{equation}
  In this equation, $Q'(x)$ can be expressed in terms of $\tilde Q(x)$
  and its derivative using \eqref{eq:q2}:
  \begin{equation}\label{eq:derivatives}
    Q'(x) = \tilde Q'(x)(1-\tilde Q(x)).
  \end{equation}
  Substituting into \eqref{eq:q-deriv} and setting $F(x) = 1 - \tilde
  Q(x)$, we obtain the differential equation
  \begin{equation}\label{eq:deriv}
    F'(x) = -\frac{{F(x)}^{\Delta-3}}{F(x)^{\Delta-2} + 2}. 
  \end{equation}
  One can check that $F''(x)$ is positive on $[0,1]$, and that $F'(0)
  = -1/3$. Hence, $F(x) \geq 1-x/3$. This implies an upper bound on
  $F'(x)$: since the function $h(t) = -t^{\Delta-3}/(t^{\Delta-2}+2)$
  is decreasing on $[0,1]$, we obtain from~(\ref{eq:deriv}) that
  \begin{equation*}
    F'(x) \leq -\frac{(1-\frac x3)^{\Delta-3}}{(1-\frac x3)^{\Delta-2} + 2}.
  \end{equation*}
  Integrating the right hand side, we obtain
  \begin{align*}
    F(1) &= F(0) + \int_0^1\! F'(x)\,dx\\
    &\leq 1 + \int_0^1\!-\frac{(1-\frac x3)^{\Delta-3}}{(1-\frac
      x3)^{\Delta-2} + 2}\,dx\\
    &= 1+\left[\frac{3\log\left((1-\frac{x}3)^{\Delta-2}+2\right)}{\Delta-2}\right]_0^1\\
    &= 1 + \frac3{\Delta-2} \cdot \log\frac{\left(\frac23\right)^{\Delta-2} +
        2}3.
  \end{align*}
  We claim that this value does not exceed
  $\sqrt{(\Delta-1)/(\Delta+1)}$. This can be checked directly for
  $4\leq \Delta<7$. For $\Delta \geq 7$, the argument of the logarithm
  is easily seen to be at most $e^{-1/3}$, which yields
  \begin{align*}
    F(1) \leq 1 - \frac1{\Delta-2} < \sqrt{\frac{\Delta-1}{\Delta+1}}.
  \end{align*}
  
  By~(\ref{eq:derivatives}) and the fact that $Q(0) = \tilde Q(0) =
  0$, we have
  \begin{equation*}
    Q(x) = \tilde Q(x) - \frac{\tilde Q(x)^2}2 = \frac{1-F(x)^2}2,
  \end{equation*}
  so the above upper bound on $F(1)$ implies
  \begin{equation*}
    Q(1) > \frac{1-\frac{\Delta-1}{\Delta+1}}2 = \frac1{\Delta+1},
  \end{equation*}
  proving the desired inequality~(\ref{eq:Q1}). 
\end{proof}

Note that (despite the technicalities in the proof of
Proposition~\ref{p:beta}) the argument for graphs with even maximum
degree is simpler than that for subcubic graphs in that it works with
just a decomposition of the graph into 2-factors, without the need to
use a uniform cover by 1-factors as in Theorem~\ref{t:edge}. For
graphs with odd maximum degree $r$, however, it is not clear how to
proceed without a suitable analogue of
Theorem~\ref{t:edge}. Furthermore, the natural analogue of
Theorem~\ref{t:edge} for $r$-regular graphs ($r$ odd) does not hold in
general. Still, it seems plausible that the following is true:

\begin{conjecture}\label{conj:odd}
  The conclusion of Theorem~\ref{t:even} holds for graphs with odd
  maximum degree as well.
\end{conjecture}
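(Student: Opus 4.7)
My plan is to follow the template of Section~\ref{sec:even}, replacing Petersen's 2-factor decomposition (unavailable for odd $\Delta$) by a fractional analogue. First, I would embed $G$ into a $\Delta$-regular graph $H$ of girth at least $g$, using a minor variant of the construction from Section~\ref{sec:subcubic} adjusted for odd-degree parity (in particular, ensuring that the sum of degree deficiencies is even before adding the padding).

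The decisive step is then to produce a probability distribution $\pi$ on the 2-factors of $H$ such that each edge lies in a $\pi$-random 2-factor with probability exactly $2/\Delta$; equivalently, the uniform vector $(2/\Delta,\ldots,2/\Delta)$ must lie in the 2-factor polytope of $H$. Granted such $\pi$, the rest of the argument parallels the even case. For each 2-factor $F$ in the support of $\pi$, I would run the algorithm of Section~\ref{sec:algorithm} with the modification from Section~\ref{sec:even} allowing each vertex to have $\Delta-2$ mates, apply the analogue of Lemma~\ref{l:weights3} to obtain a weight function on $\Phi(H)$, and average with respect to $\pi$. This yields a common weight on vertices and a (possibly different) common weight on edges. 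Following Section~\ref{sec:subcubic}, I would introduce a parameter $\xi$ into the recurrence defining $q_k(i)$ to equalise these two weights at the target value $1/(\Delta+1)$. An analogue of Proposition~\ref{p:beta} guaranteeing $\beta>1/(\Delta+1)$ for a suitable choice of $\xi$ should follow by the same differential-equation method, adjusted to the recurrence for odd $\Delta$.

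The hard part is the existence of $\pi$. For odd $\Delta\geq 5$, a bridgeless $\Delta$-regular graph can harbour small odd edge cuts that push $\chi'_f$ strictly above $\Delta$ and, by the same mechanism, obstruct a uniform 2-factor distribution; high girth by itself does not preclude such cuts, since one can build high-girth $\Delta$-regular graphs with tight odd cuts of size less than $\Delta$. One plausible remedy is to strengthen the embedding so that $H$ has edge-connectivity at least $\Delta$ in addition to having girth at least $g$. Under this condition one has $\chi'_f(H)=\Delta$, hence a uniform cover of edges by perfect matchings, and $\pi$ can be obtained by sampling two edge-disjoint matchings from this cover and taking their union. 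Realising high edge-connectivity, high girth, and $\Delta$-regularity simultaneously in the embedding of an arbitrary subcubic-like input $G$ is the principal technical hurdle. An alternative would be a more direct randomised construction of $\pi$ exploiting the locally tree-like structure granted by high girth, building the 2-factor step by step along a BFS from an arbitrary root; this would trade the global connectivity requirement for a delicate local analysis, which is where the bulk of the work would concentrate.
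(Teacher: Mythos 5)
The statement you are addressing is an open conjecture: the paper offers no proof of Conjecture~\ref{conj:odd}, and explicitly records that the authors could verify it only for $r$-graphs ($r$-regular graphs with no odd edge-cut of size less than $r$). Your proposal correctly reproduces the paper's own diagnosis of the difficulty --- the absence, for odd $\Delta$, of an edge-uniform distribution on 2-factors (equivalently, of an analogue of Theorem~\ref{t:edge}), and the fact that high girth does not exclude small odd cuts --- but it does not close the gap. Two concrete holes remain. First, even in the $\Delta$-edge-connected case, where one does get a multiset of $\Delta N$ perfect matchings covering each edge exactly $N$ times, your recipe ``sample two edge-disjoint matchings from the cover and take their union'' does not obviously put each edge in the resulting 2-factor with probability $2/\Delta$: the matchings of the cover need not pair off into edge-disjoint pairs, and the distribution induced on unions of those pairs that happen to be disjoint is not uniform on edges without further argument. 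What one actually needs is that the all-$2/\Delta$ vector lies in the 2-factor polytope of $H$, which involves Edmonds-type blossom constraints, not just degree constraints. Second, and more fundamentally, the reduction to a $\Delta$-edge-connected $\Delta$-regular host is exactly the step that cannot in general be carried out: a $\Delta$-regular graph of high girth with a bridge or a 3-edge-cut admits no proper $\Delta$-regular supergraph at all (every vertex already has full degree), so no embedding can repair its small odd cuts, and the inductive bridge-splitting used for $\Delta=3$ in Section~\ref{sec:subcubic} has no worked-out analogue for odd $\Delta\geq 5$. Your alternative ``local BFS construction of $\pi$'' is named but not carried out, so it cannot be assessed.

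In short, what you have written is a research plan that rederives (modulo the sampling issue above) the partial result the paper already states for $r$-graphs, and otherwise restates the open problem; it is not a proof of the conjecture.
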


So far, we have only been able to verify Conjecture~\ref{conj:odd} for
the case of \emph{$r$-graphs} ($r$-regular graphs with no odd
edge-cuts of size smaller than $r$).

\section*{Acknowledgments}

We thank Jean-S\'{e}bastien Sereni and two anonymous referees for
their careful reading of the manuscript and a number of suggested
corrections. 


\end{document}